\newtheorem{theorem}{Theorem}
\newtheorem{lemma}{Lemma}
\theoremstyle{definition}
\newtheorem{definition}{Definition}
\newtheorem{remark}{Remark}
\begin{document}
%
\title{Nonconvex fraction function recovery sparse signal by convex optimization algorithm}
%
%
%

\author{Angang~Cui,
        Jigen~Peng,
        Haiyang~Li
        and~Meng Wen
\thanks{A. Cui is with the School of Mathematics and Statistics, Xi'an Jiaotong University, Xi'an, 710049, China. e-mail: (cuiangang@163.com).}
\thanks{J. Peng and H. Li are with the School of Mathematics and Information Science, Guangzhou University, Guangzhou, 510006, China. e-mail: (jgpengxjtu@126.com; fplihaiyang@126.com).}
\thanks{M. Wen is with the School of Science, Xi'an Polytechnic University, Xi'an, 710048, China. e-mail: (wen5495688@163.com).}
\thanks{This work was supported by the National Natural Science Foundations of China (11771347, 91730306, 41390454, 11271297) and the Science Foundations of Shaanxi Province of
China (2016JQ1029, 2015JM1012).}
\thanks{Manuscript received, ; revised , .}}

%
%

\markboth{Journal of \LaTeX\ Class Files,~Vol.~, No.~, ~}%
{Shell \MakeLowercase{\textit{et al.}}: Bare Demo of IEEEtran.cls for IEEE Journals}
%



\maketitle

\begin{abstract}
In our latest work, a non-convex fraction function is studied to approximate the $\ell_{0}$-norm in $\ell_{0}$-norm minimization problem and translate this NP-hard $\ell_{0}$-norm minimization 
problem into a fraction function minimization problem. Two schemes of iterative FP thresholding algorithms are generated to solve the regularized fraction function minimization problem $(FP^{\lambda}_{a})$. 
One is iterative FP thresholding algorithm-Scheme 1 and the other is iterative FP thresholding algorithm-Scheme 2. A convergence theorem is proved for the iterative FP thresholding algorithm-Scheme 1. 
However, the stationary point generated by the iterative FP thresholding algorithm-Scheme 1 may be a local minimizer due to the non-convexity
of fraction function. Moreover, the regularized parameter $\lambda$ and parameter $a$ in iterative FP thresholding algorithm-Scheme 1 are two fixed given parameters, and how to choose 
the proper regularized parameter $\lambda$ and parameter $a$ for the iterative FP thresholding algorithm-Scheme 1 is a very hard problem. Although the iterative FP thresholding 
algorithm-Scheme 2 is adaptive for the choice of the regularized parameter $\lambda$, the parameter $a$ which influences the behaviour of non-convex fraction function, needs to be 
determined manually in every simulation, and how to determine the proper parameter $a$ is still a very hard problem. In this paper, instead, we will generate a convex iterative FP 
thresholding  algorithm to solve the problem $(FP^{\lambda}_{a})$. Similarly, two schemes of convex iterative FP thresholding algorithms are generated. One is convex iterative FP 
thresholding algorithm-Scheme 1 and the other is convex iterative FP thresholding algorithm-Scheme 2. A global convergence theorem is proved for the convex iterative FP thresholding 
algorithm-Scheme 1. Under an adaptive rule, the convex iterative FP thresholding algorithm-Scheme 2 will be adaptive both for the choice of the regularized parameter $\lambda$ and 
parameter $a$. These are the advantages for our two schemes of convex iterative FP thresholding algorithm compared with our previous proposed two schemes of iterative FP thresholding 
algorithm. At last, we provide a series of numerical simulations to test the performance of the convex iterative FP thresholding algorithm-Scheme 2, and the simulation results show that 
our convex iterative FP thresholding algorithm-Scheme 2 performs very well in recovering a sparse signal.
\end{abstract}

\begin{IEEEkeywords}
The $\ell_{0}$-norm minimization problem, Regularized fraction function minimization problem, Iterative FP thresholding algorithm, Convex iterative FP thresholding algorithm
\end{IEEEkeywords}

%
\IEEEpeerreviewmaketitle

\section{Introduction}\label{section1}

In information processing, many practical problems can be formulated as the following $\ell_{0}$-minimization problem\cite{Donoho1,Bruckstein2,Ranjan3,Li2019}:
\begin{equation}\label{equ1}
(P_{0})\ \ \ \ \ \min_{\mathbf{x}\in \mathbb{R}^{n}}\|\mathbf{x}\|_{0}\ \ \mathrm{subject}\ \mathrm{to}\ \ \mathbf{A}\mathbf{x}=\mathbf{b},
\end{equation}
where $\mathbf{A}\in \mathbb{R}^{m\times n}$ is a real matrix of full row rank with $m\ll n$, $\mathbf{b}\in \mathbb{R}^{m}$ is a nonzero real column vector, and
$\|\mathbf{x}\|_{0}$ is the $\ell_0$-norm of real vector $\mathbf{x}\in \mathbb{R}^{n}$, which counts the number of the non-zero entries in vector $\mathbf{x}$ \cite{Elad2010,Peng2015}.
The problem $(P_{0})$ aims to seek the sparsest signals which satisfy the underdetermined linear equations. However, it is NP-hard \cite{Natarajan1995,Foucart2013} because
of the discrete and discontinuous nature of the $\ell_{0}$-norm.

In our latest work \cite{Li2019}, we substitute the discontinuous $\ell_{0}$-norm $\|\mathbf{x}\|_{0}$ by the sparsity promoting penalty function
\begin{equation}\label{equ2}
P_{a}(\mathbf{x})=\sum_{i=1}^{n}\rho_{a}(\mathbf{x}_{i})=\sum_{i=1}^{n}\frac{a|\mathbf{x}_{i}|}{a|\mathbf{x}_{i}|+1},\ \ \ a>0,
\end{equation}
where
\begin{equation}\label{equ3}
\rho_{a}(t)=\frac{a|t|}{a|t|+1}
\end{equation}
is the fraction function and concave in $t\in[0,+\infty]$. It is easy to verify that $\rho_{a}(t)=0$ if $t=0$ and $\lim_{a\rightarrow+\infty}\rho_{a}(t)=1$ if $t\neq0$. Clearly, with
the adjustment of parameter $a$, the continuous function $P_{a}(\mathbf{x})$ can approximate the $\ell_{0}$-norm well. Then, we can translate the NP-hard problem $(P_{0})$ into the following
fraction function minimization problem
\begin{equation}\label{equ4}
(FP_{a})\ \ \ \min_{\mathbf{x}\in \mathbb{R}^{n}}P_{a}(\mathbf{x})\ \ \mathrm{subject}\ \mathrm{to}\ \ \mathbf{A}\mathbf{x}=\mathbf{b}
\end{equation}
for the constrained form and
\begin{equation}\label{equ5}
(FP^{\lambda}_{a})\ \ \ \min_{\mathbf{x}\in \mathbb{R}^{n}}\Big\{\|\mathbf{A}\mathbf{x}-\mathbf{b}\|_{2}^{2}+\lambda P_{a}(\mathbf{x})\Big\}
\end{equation}
for the regularized form, where $\lambda>0$ is the regularized parameter.

In \cite{Li2019}, two schemes of iterative FP thresholding algorithm are proposed to solve the problem $(FP^{\lambda}_{a})$. One is iterative FP thresholding algorithm-Scheme 1 and the other
is iterative FP thresholding algorithm-Scheme 2. A large number of numerical simulations on some sparse signal recovery problems have shown that the iterative FP thresholding algorithm-Scheme 2
performances very well in recovering a sparse signal compared with some state-of-art methods. However, the stationary point generated by the iterative FP thresholding algorithm-Scheme 1 may be 
a local minimizer due to the non-convexity of fraction function. Moreover, the regularized parameter $\lambda$ and parameter $a$ in iterative FP thresholding algorithm-Scheme 1 are two fixed 
given parameters, and how to choose the proper regularized parameter $\lambda$ and parameter $a$ for the iterative FP thresholding algorithm-Scheme 1 is a very hard problem. Although the 
iterative FP thresholding algorithm-Scheme 2 is adaptive for the choice of the regularized parameter $\lambda$, the parameter $a$ which influences the behaviour of non-convex fraction function, 
needs to be determined manually in every simulation, and how to determine the proper parameter $a$ is still a very hard problem. In this paper, instead, we will generate a convex iterative FP 
thresholding algorithm to solve the problem $(FP^{\lambda}_{a})$. Similarly, two schemes of convex iterative
FP thresholding algorithms are generated. One is convex iterative FP thresholding algorithm-Scheme 1 and the other is convex iterative FP thresholding algorithm-Scheme 2. A global convergence
theorem is proved for the convex iterative FP thresholding algorithm-Scheme 1. Under an adaptive rule, the convex iterative FP thresholding algorithm-Scheme 2 will be adaptive both for the
choice of the regularized parameter $\lambda$ and parameter $a$. These are the advantages for our two schemes of convex iterative FP thresholding algorithm compared with our previous proposed
two schemes of iterative FP thresholding algorithm.

The rest of this paper is organized as follows. In Section \ref{section2}, we review some known results about our previous proposed iterative FP thresholding algorithm for solving the problem
$(FP^{\lambda}_{a})$. In Section \ref{section3}, a convex iterative FP thresholding algorithm is proposed to solve the regularized problem $(FP^{\lambda}_{a})$. In Section \ref{section4}, a series
of numerical simulations on some sparse signal recovery problems are demonstrated. In \ref{section5}, we conclude some remarks in this paper.

\section{Iterative FP thresholding algorithm for solving the problem $(FP^{\lambda}_{a})$} \label{section2}

In this section, we review some known results from our latest work \cite{Li2019} for our previous proposed two schemes of iterative FP thresholding algorithms to solve the problem $(FP^{\lambda}_{a})$.

\begin{lemma}\label{lem1}
(\cite{Li2019}) Define a function of $\beta\in \mathbb{R}$ as
\begin{equation}\label{equ6}
f_{\lambda}(\beta)=(\beta-\gamma)^{2}+\lambda\rho_{a}(\beta)
\end{equation}
where $\gamma\in \mathbb{R}$ and $\lambda>0$, the optimal solution to $\min_{\beta\in \mathbb{R}}f_{\lambda}(\beta)$ can be expressed as
\begin{equation}\label{equ7}
\begin{array}{llll}
\beta^{\ast}&=&h_{\lambda}(\gamma)\\
&=&\left\{
    \begin{array}{ll}
      g_{\lambda}(\gamma), & \ \ \mathrm{if} \ {|\gamma|> t_{\lambda};} \\
      0, & \ \ \mathrm{if} \ {|\gamma|\leq t_{\lambda},}
    \end{array}
  \right.
  \end{array}
\end{equation}
where $g_{\lambda}(\gamma)$ is defined as
\begin{equation}\label{equ8}
g_{\lambda}(\gamma)=\mathrm{sign}(\gamma)\bigg(\frac{\frac{1+a|\gamma|}{3}(1+2\cos(\frac{\phi_{\lambda}(\gamma)}{3}-\frac{\pi}{3}))-1}{a}\bigg)
\end{equation}
with
\begin{equation}\label{equ9}
\phi_{\lambda}(\gamma)=\arccos\Big(\frac{27\lambda a^{2}}{4(1+a|\gamma|)^{3}}-1\Big),
\end{equation}
and the threshold value $t_{\lambda}$ satisfies
\begin{equation}\label{equ10}
t_{\lambda}=\left\{
    \begin{array}{ll}
      \frac{\lambda a}{2}, & \ \ \mathrm{if} \ {\lambda\leq \frac{1}{a^{2}};} \\
      \sqrt{\lambda}-\frac{1}{2a}, & \ \ \mathrm{if} \ {\lambda>\frac{1}{a^{2}}.}
    \end{array}
  \right.
\end{equation}
\end{lemma}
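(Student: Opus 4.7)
The plan is to reduce the problem to a one-dimensional optimization on the nonnegative half-line and then to analyze the resulting cubic equation explicitly. First I would observe that $\rho_a(\beta)=\rho_a(|\beta|)$, so that any minimizer of $f_\lambda$ must satisfy $\beta^\ast\gamma\geq 0$ (otherwise flipping the sign of $\beta^\ast$ keeps the penalty and strictly decreases the quadratic term). This reduction lets me assume $\gamma\geq 0$ and restrict attention to $\beta\geq 0$; the factor $\mathrm{sign}(\gamma)$ appearing in \eqref{equ7} then encodes the symmetry.

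Next, for $\beta>0$ the objective is smooth and
\[
f_\lambda'(\beta)=2(\beta-\gamma)+\frac{\lambda a}{(a\beta+1)^2},
\]
so stationary points satisfy $(\beta-\gamma)(a\beta+1)^2+\lambda a/2=0$. Substituting $u=a\beta+1$ brings this to the monic cubic $u^3-(1+a\gamma)u^2+\lambda a^2/2=0$, and the further shift $u=v+(1+a\gamma)/3$ yields a depressed cubic $v^3+pv+q=0$ with $p=-3\alpha^2$, $q=-2\alpha^3+\lambda a^2/2$, where $\alpha=(1+a\gamma)/3$. Applying Cardano's trigonometric formula produces three real roots $v_k=2\alpha\cos\bigl(\tfrac{1}{3}\arccos(\tfrac{3q}{2p}\sqrt{-3/p})-\tfrac{2\pi k}{3}\bigr)$, and a short simplification shows $\tfrac{3q}{2p}\sqrt{-3/p}=1-\tfrac{27\lambda a^2}{4(1+a\gamma)^3}$; using $\arccos(-x)=\pi-\arccos(x)$ converts this expression into $\phi_\lambda(\gamma)$ of \eqref{equ9}. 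Back-substituting $\beta=(u-1)/a$ and selecting the branch that yields the local minimizer (as opposed to the spurious local maximizer) gives precisely the formula for $g_\lambda(\gamma)$ in \eqref{equ8}; the correct branch can be pinned down by checking the asymptotics $g_\lambda(\gamma)\to\gamma$ as $\gamma\to\infty$.

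The main obstacle, and the conceptually richest step, is the derivation of the piecewise threshold $t_\lambda$ in \eqref{equ10}. I would study the auxiliary function $h(\beta):=f_\lambda'(\beta)/2$ on $[0,\infty)$; differentiating gives $h'(\beta)=1-\lambda a^2(a\beta+1)^{-3}$, so $h$ is monotone increasing precisely when $\lambda\leq 1/a^2$. In this first regime there is at most one positive critical point, which exists if and only if $h(0)<0$, i.e. $\gamma>\lambda a/2$; thus the transition between $\beta^\ast=0$ and $\beta^\ast=g_\lambda(\gamma)>0$ is continuous and $t_\lambda=\lambda a/2$. When $\lambda>1/a^2$, however, $h$ has an interior minimum on $(0,\infty)$ and $f_\lambda$ can support two local minima simultaneously: $\beta=0$ and some $\beta^\ast>0$. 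The threshold is then determined by equating objective values, i.e. solving
\[
f_\lambda(\beta^\ast)=\gamma^2,\qquad f_\lambda'(\beta^\ast)=0.
\]
Setting $s=a\beta^\ast+1$ and eliminating $\gamma$ between the two equations collapses (after cancelling the factor $s-1\neq 0$) to $s^2=\lambda a^2$; this is feasible with $\beta^\ast>0$ precisely because $\lambda>1/a^2$. Back-substituting yields $\gamma=\sqrt{\lambda}-1/(2a)$, which is exactly $t_\lambda$ in the second branch. Combining the two regimes and restoring $\mathrm{sign}(\gamma)$ completes the characterization \eqref{equ7}.
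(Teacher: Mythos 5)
Your argument is correct: the symmetry reduction to $\gamma\geq 0$, the Cardano/trigonometric solution of the cubic $(u-(1+a\gamma))u^{2}+\lambda a^{2}/2=0$ reproducing \eqref{equ8}--\eqref{equ9}, and the two-regime threshold analysis (monotone $f_{\lambda}'$ giving $t_{\lambda}=\lambda a/2$ when $\lambda\leq 1/a^{2}$, versus the equal-value condition $f_{\lambda}(\beta^{\ast})=\gamma^{2}$, $f_{\lambda}'(\beta^{\ast})=0$ collapsing to $s^{2}=\lambda a^{2}$ and hence $t_{\lambda}=\sqrt{\lambda}-\tfrac{1}{2a}$ when $\lambda>1/a^{2}$) all check out. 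The paper itself states Lemma~\ref{lem1} without proof, importing it from \cite{Li2019}; your derivation is the standard one underlying that reference, so there is no substantive divergence to report --- the only point worth tightening is the branch selection, which is cleanest to justify by noting that the local minimizer corresponds to the largest real root of the cubic (where $f_{\lambda}'$ changes sign from negative to positive) rather than by asymptotics alone.
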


\begin{definition}\label{def1}
(\cite{Foucart2013}) The nonincreasing rearrangement of the vector $\mathbf{x}\in \mathbb{R}^{n}$ is the vector $|\mathbf{x}|\in \mathbb{R}^{n}$ for which
$$|\mathbf{x}|_{1}\geq |\mathbf{x}|_{2}\geq\cdots\geq |\mathbf{x}|_{n}\geq0$$
and there is a permutation $\pi:[n]\rightarrow[n]$ with $|\mathbf{x}|_{i}=|\mathbf{x}_{\pi(i)}|$ for all $i\in[n]$.
\end{definition}

Now, we consider the following regularized function
\begin{equation}\label{equ11}
\mathcal{C}_{\lambda}(\mathbf{x})=\|\mathbf{A}\mathbf{x}-\mathbf{b}\|_{2}^{2}+\lambda P_{a}(\mathbf{x})
\end{equation}
and its surrogate function
\begin{equation}\label{equ12}
\mathcal{C}_{\lambda, \mu}(\mathbf{x}, \mathbf{z})=\mu[\mathcal{C}_{\lambda}(\mathbf{x})-\|\mathbf{A}\mathbf{x}-\mathbf{A}\mathbf{z}\|_{2}^{2}]+\|\mathbf{x}-\mathbf{z}\|_{2}^{2}
\end{equation}
for any $\lambda>0$, $\mu>0$ and $\mathbf{z}\in \mathbb{R}^{n}$.

When we set $0<\mu\leq\|\mathbf{A}\|_{2}^{-2}$, we can get that
$$\|\mathbf{x}-\mathbf{z}\|_{2}^{2}-\mu\|\mathbf{A}\mathbf{x}-\mathbf{A}\mathbf{z}\|_{2}^{2}\geq0.$$
Therefore, we have
\begin{equation}\label{equ13}
\begin{array}{llll}
\mathcal{C}_{\lambda, \mu}(\mathbf{x}, \mathbf{z})&=&\mu\mathcal{C}_{\lambda}(\mathbf{x})-\mu\|\mathbf{A}\mathbf{x}-\mathbf{A}\mathbf{z}\|_{2}^{2}+\|\mathbf{x}-\mathbf{z}\|_{2}^{2}\\
&\geq&\mu\mathcal{C}_{\lambda}(\mathbf{x}).
\end{array}
\end{equation}

Under the condition $0<\mu\leq\|\mathbf{A}\|_{2}^{-2}$, if we suppose that the vector $\mathbf{x}^{\ast}\in \mathbb{R}^{n}$ is a minimizer of $\mathcal{C}_{\lambda}(\mathbf{x})$, then
\begin{eqnarray*}
\mathcal{C}_{\lambda, \mu}(\mathbf{x},\mathbf{x}^{\ast})&=&\mu[\mathcal{C}_{\lambda}(\mathbf{x})-\|\mathbf{A}\mathbf{x}-\mathbf{A}\mathbf{x}^{\ast}\|_{2}^{2}]+\|\mathbf{x}-\mathbf{x}^{\ast}\|_{2}^{2}\\
&\geq&\mu \mathcal{C}_{\lambda}(\mathbf{x})\\
&\geq&\mu \mathcal{C}_{\lambda}(\mathbf{x}^{\ast})\\
&=&\mathcal{C}_{\lambda, \mu}(\mathbf{x}^{\ast},\mathbf{x}^{\ast}),
\end{eqnarray*}
which implies that $\mathbf{x}^{\ast}$ is also a minimizer of $\mathcal{C}_{\lambda, \mu}(\mathbf{x},\mathbf{x}^{\ast})$ on $\mathbf{x}\in \mathbb{R}^{n}$ for any fixed $\mu\in(0,\|\mathbf{A}\|_{2}^{-2}]$.

On the other hand, $\mathcal{C}_{\lambda, \mu}(\mathbf{x},\mathbf{z})$ can be reexpressed as
\begin{eqnarray*}
\mathcal{C}_{\lambda,\mu}(\mathbf{x},\mathbf{z})&=&\|\mathbf{x}-(\mathbf{z}-\mu \mathbf{A}^{\top}\mathbf{A}\mathbf{z}+\mu \mathbf{A}^{\top}\mathbf{b})\|_{2}^{2}\\
&&+\lambda\mu P_{a}(\mathbf{x})+\mu\|\mathbf{b}\|_{2}^{2}+\|\mathbf{z}\|_{2}^{2}-\mu\|\mathbf{A}\mathbf{z}\|_{2}^{2}\\
&&-\|\mathbf{z}-\mu \mathbf{A}^{\top}\mathbf{A}\mathbf{z}+\mu \mathbf{A}^{\top}\mathbf{b}\|_{2}^{2}\\
&=&\sum_{i=1}^{n}\Big(\big(\mathbf{x}_{i}-(B_{\mu}(\mathbf{z}))_{i}\big)^{2}+\lambda\mu \rho_{a}(\mathbf{x}_{i})\Big)\\
&&+\mu\|\mathbf{b}\|_{2}^{2}+\|\mathbf{z}\|_{2}^{2}-\mu\|\mathbf{A}\mathbf{z}\|_{2}^{2}-\|B_{\mu}(\mathbf{z})\|_{2}^{2},
\end{eqnarray*}
where $B_{\mu}(\mathbf{z})=\mathbf{z}+\mu \mathbf{A}^{\top}(\mathbf{b}-\mathbf{A}\mathbf{z})$. This means that for any fixed $\lambda>0$ and $\mu>0$ minimizing
$\mathcal{C}_{\lambda,\mu}(\mathbf{x},\mathbf{z})$ on $\mathbf{x}\in \mathbb{R}^{n}$ is equivalent to solve
\begin{equation}\label{equ14}
\min_{\mathbf{x}\in \mathbb{R}^{n}}\bigg\{\sum_{i=1}^{n}\Big(\big(\mathbf{x}_{i}-(B_{\mu}(\mathbf{z}))_{i}\big)^{2}+\lambda\mu \rho_{a}(\mathbf{x}_{i})\Big)\bigg\}.
\end{equation}
Notice that the summation of problem (\ref{equ14}) is separable, therefore, solving problem (\ref{equ14}) is equivalent to solving the following $n$ subproblem
\begin{equation}\label{equ15}
\min_{\mathbf{x}_{i}\in \mathbb{R}}\Big\{\big(\mathbf{x}_{i}-(B_{\mu}(\mathbf{z}))_{i}\big)^{2}+\lambda\mu \rho_{a}(\mathbf{x}_{i})\Big\}
\end{equation}
for $i=1,2,\cdots,n$.

By Lemma \ref{lem1}, the minimizer $\mathbf{x}_{i}^{s}\in \mathbb{R}$ of problem (\ref{equ15}) can be given by
\begin{equation}\label{equ16}
\begin{array}{llll}
\mathbf{x}_{i}^{s}&=&h_{\lambda\mu}((B_{\mu}(\mathbf{z}))_{i})\\
&=&\left\{
    \begin{array}{ll}
      g_{\lambda\mu}((B_{\mu}(\mathbf{z}))_{i}), & \mathrm{if} \ {|(B_{\mu}(\mathbf{z}))_{i}|> t_{\lambda\mu};} \\
      0, & \mathrm{if} \ {|(B_{\mu}(\mathbf{z}))_{i}|\leq t_{\lambda\mu},}
    \end{array}
  \right.
  \end{array}
\end{equation}
for $i=1,2,\cdots,n$, where $h_{\lambda\mu}$, $g_{\lambda\mu}$ and $t_{\lambda\mu}$ are obtained by replacing $\lambda$ with $\lambda\mu$ in $h_{\lambda}$, $g_{\lambda}$
and $t_{\lambda}$. Therefore, we get the minimizer $\mathbf{x}^{s}\in \mathbb{R}^{n}$ of $\mathcal{C}_{\lambda,\mu}(\mathbf{x},\mathbf{z})$ on $\mathbf{x}\in \mathbb{R}^{n}$
as follows
\begin{equation}\label{equ17}
\mathbf{x}^{s}=H_{\lambda\mu}(B_{\mu}(\mathbf{z})),
\end{equation}
where $H_{\lambda\mu}$ is the fraction function thresholding operator defined as
\begin{equation}\label{equ18}
H_{\lambda\mu}(B_{\mu}(\mathbf{z}))
=\big(h_{\lambda\mu}((B_{\mu}(\mathbf{z}))_{1}),\cdots,h_{\lambda\mu}((B_{\mu}(\mathbf{z}))_{n})\big)^{\top}.
\end{equation}

The above analysis show us that the regularized problem $(FP^{\lambda}_{a})$ permits a thresholding representation theory for its solution, and through these analysis we can
conclude the following theorem.

\begin{theorem} \label{th1}
For any fixed $\lambda>0$ and $\mu\in(0,\|\mathbf{A}\|_{2}^{-2}]$, if $\mathbf{x}^{\ast}\in \mathbb{R}^{n}$ is a solution to the regularized problem $(FP^{\lambda}_{a})$, then
\begin{equation}\label{equ19}
\mathbf{x}^{\ast}=H_{\lambda\mu}(B_{\mu}(\mathbf{x}^{\ast})).
\end{equation}
Particularly, one can express
\begin{equation}\label{equ20}
\begin{array}{llll}
\mathbf{x}_{i}^{\ast}&=&h_{\lambda\mu}((B_{\mu}(\mathbf{x}^{\ast}))_{i})\\
&=&\left\{
    \begin{array}{ll}
      g_{\lambda\mu}((B_{\mu}(\mathbf{x}^{\ast}))_{i}), & \mathrm{if} \ {|(B_{\mu}(\mathbf{x}^{\ast}))_{i}|> t_{\lambda\mu};} \\
      0, & \mathrm{if} \ {|(B_{\mu}(\mathbf{x}^{\ast}))_{i}|\leq t_{\lambda\mu},}
    \end{array}
  \right.
  \end{array}
\end{equation}
for $i=1,2,\cdots,n$, where
\begin{equation}\label{equ21}
t_{\lambda\mu}=\left\{
    \begin{array}{ll}
      \frac{\lambda\mu a}{2}, & \ \ \mathrm{if} \ {\lambda\leq \frac{1}{a^{2}\mu};} \\
      \sqrt{\lambda\mu}-\frac{1}{2a}, & \ \ \mathrm{if} \ {\lambda>\frac{1}{a^{2}\mu}.}
    \end{array}
  \right.
\end{equation}
\end{theorem}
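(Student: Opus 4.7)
The plan is to exploit the majorization structure that has already been set up in the excerpt. The surrogate function $\mathcal{C}_{\lambda,\mu}(\mathbf{x},\mathbf{z})$ satisfies $\mathcal{C}_{\lambda,\mu}(\mathbf{x},\mathbf{z}) \geq \mu \mathcal{C}_{\lambda}(\mathbf{x})$ whenever $0 < \mu \leq \|\mathbf{A}\|_{2}^{-2}$, with equality at $\mathbf{x}=\mathbf{z}$. So if $\mathbf{x}^{\ast}$ is a solution of $(FP^{\lambda}_{a})$, substituting $\mathbf{z}=\mathbf{x}^{\ast}$ yields
\[
\mathcal{C}_{\lambda,\mu}(\mathbf{x},\mathbf{x}^{\ast}) \geq \mu \mathcal{C}_{\lambda}(\mathbf{x}) \geq \mu \mathcal{C}_{\lambda}(\mathbf{x}^{\ast}) = \mathcal{C}_{\lambda,\mu}(\mathbf{x}^{\ast},\mathbf{x}^{\ast}),
\]
so $\mathbf{x}^{\ast}$ must minimize $\mathbf{x}\mapsto\mathcal{C}_{\lambda,\mu}(\mathbf{x},\mathbf{x}^{\ast})$ over $\mathbb{R}^{n}$. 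This is the key reduction that turns a hard fixed-point statement into $n$ scalar problems I can solve explicitly.

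Next I would invoke the separable reformulation already displayed in the excerpt, which writes $\mathcal{C}_{\lambda,\mu}(\mathbf{x},\mathbf{x}^{\ast})$, up to constants depending only on $\mathbf{x}^{\ast}$, as the sum over $i$ of
\[
\bigl(\mathbf{x}_{i}-(B_{\mu}(\mathbf{x}^{\ast}))_{i}\bigr)^{2}+\lambda\mu\,\rho_{a}(\mathbf{x}_{i}).
\]
Since the objective decouples across coordinates, minimizing in $\mathbf{x}$ amounts to solving the $n$ one-dimensional subproblems (\ref{equ15}) with $\gamma_{i}=(B_{\mu}(\mathbf{x}^{\ast}))_{i}$ and with the regularization weight $\lambda$ replaced by $\lambda\mu$.

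At this point the heavy lifting is done by Lemma \ref{lem1}: applied to each coordinate with parameters $(\lambda\mu,\gamma_{i})$, it gives the explicit scalar minimizer $h_{\lambda\mu}(\gamma_{i})$ together with the threshold rule in (\ref{equ21}), which is just (\ref{equ10}) with $\lambda\leftarrow\lambda\mu$. Assembling the coordinates through the definition (\ref{equ18}) of $H_{\lambda\mu}$ produces the fixed-point identity $\mathbf{x}^{\ast}=H_{\lambda\mu}(B_{\mu}(\mathbf{x}^{\ast}))$, and the coordinatewise statement (\ref{equ20}) with threshold (\ref{equ21}) follows at once.

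The only subtle point I anticipate is the edge case $|(B_{\mu}(\mathbf{x}^{\ast}))_{i}| = t_{\lambda\mu}$, where the scalar subproblem in Lemma \ref{lem1} admits more than one minimizer; here I would note that $\mathbf{x}^{\ast}$, being a minimizer of $\mathcal{C}_{\lambda,\mu}(\cdot,\mathbf{x}^{\ast})$, must coincide with \emph{some} minimizer of each subproblem, and the convention in (\ref{equ7}) selecting $h_{\lambda\mu}(\gamma)=0$ on the threshold keeps the statement consistent. Apart from this minor nonuniqueness, the argument is a direct chain: majorization $\Rightarrow$ fixed-$\mathbf{z}$ minimization $\Rightarrow$ separability $\Rightarrow$ Lemma \ref{lem1}.
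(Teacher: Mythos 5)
Your proposal is correct and follows essentially the same route as the paper: the majorization inequality $\mathcal{C}_{\lambda,\mu}(\mathbf{x},\mathbf{x}^{\ast})\geq\mu\mathcal{C}_{\lambda}(\mathbf{x})$ shows $\mathbf{x}^{\ast}$ minimizes the surrogate at $\mathbf{z}=\mathbf{x}^{\ast}$, the surrogate separates into the $n$ scalar problems (\ref{equ15}), and Lemma \ref{lem1} with $\lambda\leftarrow\lambda\mu$ yields (\ref{equ20}) and (\ref{equ21}). Your remark on the tie case $|(B_{\mu}(\mathbf{x}^{\ast}))_{i}|=t_{\lambda\mu}$ is a point the paper passes over silently, and handling it as you do is the right call.
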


With the representation (\ref{equ19}), the iterative FP thresholding algorithm for solving the problem $(FP^{\lambda}_{a})$ can be naturally given by
\begin{equation}\label{equ22}
\mathbf{x}^{k+1}=H_{\lambda\mu}(B_{\mu}(\mathbf{x}^{k})). 
\end{equation}

As we all know, the quality of the solutions to the regularized problems depends seriously on the setting of regularized parameter $\lambda$. However, how to choose the optimal regularized 
parameter for a regularized problem is a very hard problem. In \cite{Li2019}, a rule is given to select the proper regularized parameter $\lambda$ for the iterative FP thresholding algorithm. 
In the following, we briefly describe the selection process.

Suppose that the vector $\mathbf{x}^{\ast}$ of sparsity $r$ is the optimal solution to the problem $(FP^{\lambda}_{a})$. Then, by Theorem \ref{th1}, the following inequalities hold:
$$|B_\mu(\mathbf{x}^{\ast})|_{i}>t_{\lambda\mu}\Leftrightarrow i\in \{1,2,\cdot\cdot\cdot,r\},$$
$$|B_\mu(\mathbf{x}^{\ast})|_{j}\leq t\Leftrightarrow j\in \{r+1, r+2, \cdot\cdot\cdot, n\},$$
where $t_{\lambda\mu}$ is the threshold value which is defined in (\ref{equ21}).
According to $\sqrt{\lambda\mu}-\frac{1}{2a}\leq \frac{\lambda\mu a}{2}$, we have

\begin{equation}\label{equ23}
\left\{
  \begin{array}{ll}
   |B_\mu(\mathbf{x}^{\ast})|_{r}>\sqrt{\lambda\mu}-\frac{1}{2a};\\
   |B_\mu(\mathbf{x}^{\ast})|_{r+1}\leq\frac{\lambda\mu a}{2},
  \end{array}
\right.
\end{equation}
which implies
\begin{equation}\label{equ24}
\frac{2|B_\mu(\mathbf{x}^{\ast})|_{r+1}}{a\mu}\leq\lambda <\frac{(2a|B_\mu(\mathbf{x}^{\ast})|_{r}+1)^2}{4a^{2}\mu}.
\end{equation}
The above estimate helps to set the proper regularized parameter $\lambda$. A choice of the proper regularized parameter $\lambda$ is
\begin{equation}\label{equ25}
\begin{array}{llll}
\lambda^{\ast}=\left\{
            \begin{array}{ll}
              \lambda_{1}=\frac{2|B_{\mu}(\mathbf{x}^{\ast})|_{r+1}}{a\mu}, & {\mathrm{if} \ \lambda_{1}\leq\frac{1}{a^{2}\mu};} \\
              \lambda_{2}=\frac{(1-\epsilon)(2a|B_{\mu}(\mathbf{x}^{\ast})|_{r}+1)^{2}}{4a^{2}\mu}, & {\mathrm{if} \ \lambda_{1}>\frac{1}{a^{2}\mu},}
            \end{array}
          \right.
\end{array}
\end{equation}
where $\epsilon$ is a small positive number such as 0.1,0.01 or 0.001.

Instead of real solution, we can approximate $\mathbf{x}^{\ast}$ by known $\mathbf{x}^{k}$ in (\ref{equ25}). Therefore, in each iteration, the proper regularized
parameter $\lambda$ can be selected as
\begin{equation}\label{equ26}
\begin{array}{llll}
\lambda_{k}^{\ast}=\left\{
            \begin{array}{ll}
              \lambda_{1,k}=\frac{2|B_{\mu}(\mathbf{x}^{k})|_{r+1}}{a\mu}, & {\mathrm{if} \ \lambda_{1,k}\leq\frac{1}{a^{2}\mu};} \\
              \lambda_{2,k}=\frac{(1-\epsilon)(2a|B_{\mu}(\mathbf{x}^{k})|_{r}+1)^{2}}{4a^{2}\mu}, & {\mathrm{if} \ \lambda_{1,k}>\frac{1}{a^{2}\mu}.}
            \end{array}
          \right.
\end{array}
\end{equation}
When doing so, the iterative FP thresholding algorithm will be adaptive and free from the choice of the regularized parameter $\lambda$.

Incorporated with different parameter-setting strategies, iteration (\ref{equ22}) defines different implementation schemes of the iterative FP thresholding algorithm, i.e.,

Scheme 1: $\mu=\mu_{0}\in(0, \|\mathbf{A}\|_{2}^{-2})$; $\lambda=\lambda_0\in(\|b\|_{2}^{2}, \bar{\lambda})$, \footnote{
The parameter $\bar{\lambda}$ in Scheme 1 is defined as
$$\bar{\lambda}=\|b\|_{2}^{2}+\frac{\|A^{\top}b\|_{\infty}+\sqrt{\|A^{\top}b\|_{\infty}+2a\|b\|_{2}^{2}\|A^{\top}b\|_{\infty}}}{a}.$$} and $a=a_{0}$ ($a_{0}>0$
is a given positive number).

Scheme 2: $\mu=\mu_{0}\in(0, \|\mathbf{A}\|_{2}^{-2})$; $\lambda=\lambda_{k}^{\ast}$ defined in (\ref{equ26}), and $a=a_{0}$ ($a_{0}>0$ is a given positive number).

There is one more thing in Scheme 2 needed to be mentioned that, in each iteration, the threshold value function is set to $t_{\lambda\mu}=\frac{\lambda\mu a}{2}$ when
$\lambda=\lambda_{1,k}$, and $t_{\lambda\mu}=\sqrt{\lambda\mu}-\frac{1}{2a}$ when $\lambda=\lambda_{2,k}$.

\begin{algorithm}
\caption{: Iterative FP Thresholding Algorithm-Scheme 1 (IFPTA-S1)}
\label{alg:A}
\begin{algorithmic}
\STATE {\textbf{Input}: $\mathbf{A}\in\mathbb{R}^{m\times n}$, $\mathbf{b}\in \mathbb{R}^{m}$, $\mu=\mu_{0}\in(0,\|\mathbf{A}\|_{2}^{-2})$, $\lambda=\lambda_0\in(\|b\|_{2}^{2}, \bar{\lambda})$,
$a=a_{0}$ ($a_{0}>0$ is a given positive number);}
\STATE {\textbf{Initialize}: Given $\mathbf{x}^{0}\in \mathbb{R}^{n}$;}
\STATE {\textbf{while} not converged \textbf{do}}
\STATE \ {$B_{\mu}(\mathbf{x}^{k})=\mathbf{x}^{k}+\mu \mathbf{A}^{\top}(\mathbf{b}-\mathbf{A}\mathbf{x}^{k})$;}
\STATE \ {\textbf{if}\ $\lambda\leq\frac{1}{a^{2}\mu}$\ then}
\STATE \ \ \ \  {$t_{\lambda\mu}=\frac{\lambda\mu a}{2}$}
\STATE \ \ \ \ {for\ $i=1:\mathrm{length}(\mathbf{x})$}
\STATE \ \ \ \ \ \  {if $|(B_{\mu}(\mathbf{x}^{k}))_{i}|>t_{\lambda\mu}$, then $\mathbf{x}_{i}^{k+1}=g_{\lambda\mu}((B_{\mu}(\mathbf{x}^{k}))_{i})$;}
\STATE \ \ \ \ \ \  {if $|(B_{\mu}(\mathbf{x}^{k}))_{i}|\leq t_{\lambda\mu}$, then $\mathbf{x}_{i}^{k+1}=0$;}
\STATE \  {\textbf{else}}
\STATE \ \ \ \ {$t_{\lambda\mu}=\sqrt{\lambda\mu}-\frac{1}{2a}$}
\STATE \ \ \ \ {for\ $i=1:\mathrm{length}(\mathbf{x})$}
\STATE \ \ \ \ \ \ {if $|(B_{\mu}(\mathbf{x}^{k}))_{i}|>t_{\lambda\mu}$, then $\mathbf{x}_{i}^{k+1}=g_{\lambda\mu}((B_{\mu}(\mathbf{x}^{k}))_{i})$;}
\STATE \ \ \ \ \ \ {if $|(B_{\mu}(\mathbf{x}^{k}))_{i}|\leq t_{\lambda\mu}$, then $\mathbf{x}_{i}^{k+1}=0$;}
\STATE \ \  {\textbf{end}}
\STATE \ \  {$k\rightarrow k+1$;}
\STATE{\textbf{end while}}
\STATE{\textbf{Output}: $\mathbf{x}^{\ast}$}
\end{algorithmic}
\end{algorithm}

The above analysis leads to two schemes of the iterative FP thresholding algorithm. One is iterative FP Thresholding Algorithm-Scheme 1 (which is named IFPTA-S1
for short in this paper) and the other is iterative FP Thresholding Algorithm-Scheme 2 (which is named IFPTA-S2 for short in this paper). We summarized these two
algorithms in Algorithm \ref{alg:A} and Algorithm \ref{alg:B}.

The basic convergence theorem of IFPTA-S1 can be stated as below

\begin{theorem} \label{th2}
Let $\{\mathbf{x}^{k}\}$ be the sequence generated by IFPTA-S1. Then \\
(1) The sequence $\{\mathcal{C}_{\lambda}(\mathbf{x}^{k})\}$ is decreasing;\\
(2) $\{\mathbf{x}^{k}\}$ is asymptotically regular, i.e., $\lim_{k\rightarrow\infty}\|\mathbf{x}^{k+1}-\mathbf{x}^{k}\|_{2}^{2}=0$;\\
(3) $\{\mathbf{x}^{k}\}$ converges to a stationary point of the iteration (\ref{equ22}).
\end{theorem}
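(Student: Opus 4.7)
The plan is to view IFPTA-S1 as a majorization-minimization scheme driven by the surrogate $\mathcal{C}_{\lambda,\mu}(\mathbf{x},\mathbf{z})$. The crucial algebraic fact, already observed before Theorem \ref{th1}, is that for $0<\mu\le\|\mathbf{A}\|_{2}^{-2}$ we have $\mathcal{C}_{\lambda,\mu}(\mathbf{x},\mathbf{z})\ge \mu\mathcal{C}_{\lambda}(\mathbf{x})$ with equality at $\mathbf{x}=\mathbf{z}$. Moreover, by the construction preceding (\ref{equ17}), $\mathbf{x}^{k+1}=H_{\lambda\mu}(B_{\mu}(\mathbf{x}^{k}))$ is (by Lemma \ref{lem1} applied coordinatewise) a global minimizer of $\mathbf{x}\mapsto\mathcal{C}_{\lambda,\mu}(\mathbf{x},\mathbf{x}^{k})$.

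For part (1), I would chain these two properties:
\begin{equation*}
\mu\mathcal{C}_{\lambda}(\mathbf{x}^{k+1})\le\mathcal{C}_{\lambda,\mu}(\mathbf{x}^{k+1},\mathbf{x}^{k})\le\mathcal{C}_{\lambda,\mu}(\mathbf{x}^{k},\mathbf{x}^{k})=\mu\mathcal{C}_{\lambda}(\mathbf{x}^{k}),
\end{equation*}
which gives monotone decrease. Since IFPTA-S1 fixes $\mu=\mu_{0}\in(0,\|\mathbf{A}\|_{2}^{-2})$, we actually have the sharper inequality
\begin{equation*}
\mathcal{C}_{\lambda,\mu}(\mathbf{x}^{k+1},\mathbf{x}^{k})-\mu\mathcal{C}_{\lambda}(\mathbf{x}^{k+1})\ge (1-\mu\|\mathbf{A}\|_{2}^{2})\|\mathbf{x}^{k+1}-\mathbf{x}^{k}\|_{2}^{2},
\end{equation*}
with $c:=1-\mu\|\mathbf{A}\|_{2}^{2}>0$. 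Rearranging yields $c\|\mathbf{x}^{k+1}-\mathbf{x}^{k}\|_{2}^{2}\le\mu\bigl(\mathcal{C}_{\lambda}(\mathbf{x}^{k})-\mathcal{C}_{\lambda}(\mathbf{x}^{k+1})\bigr)$. Since $\mathcal{C}_{\lambda}\ge 0$, the right-hand sides telescope to a finite sum, giving $\sum_{k}\|\mathbf{x}^{k+1}-\mathbf{x}^{k}\|_{2}^{2}<\infty$ and hence part (2).

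For part (3), I would first establish boundedness of $\{\mathbf{x}^{k}\}$. Monotonicity bounds the data-fidelity term $\|\mathbf{A}\mathbf{x}^{k}-\mathbf{b}\|_{2}^{2}\le\mathcal{C}_{\lambda}(\mathbf{x}^{0})$, and once combined with the fact that the iterates live in the level set of $\mathcal{C}_{\lambda}$, one can show (using that $\mathbf{x}^{k+1}$ is a proximal-type minimizer, so its nonzero coordinates are driven by the corresponding coordinates of $B_{\mu}(\mathbf{x}^{k})$ which themselves stay bounded because $\mathbf{A}^{\top}(\mathbf{A}\mathbf{x}^{k}-\mathbf{b})$ is bounded) that $\{\mathbf{x}^{k}\}$ lies in a compact set; alternatively this can be extracted directly from the explicit form of $g_{\lambda\mu}$ and $B_{\mu}$. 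By Bolzano--Weierstrass extract a convergent subsequence $\mathbf{x}^{k_{j}}\to\mathbf{x}^{\ast}$. By asymptotic regularity, $\mathbf{x}^{k_{j}+1}\to\mathbf{x}^{\ast}$ as well, and since $B_{\mu}$ is continuous, $B_{\mu}(\mathbf{x}^{k_{j}})\to B_{\mu}(\mathbf{x}^{\ast})$. The final step is to pass to the limit in $\mathbf{x}^{k_{j}+1}=H_{\lambda\mu}(B_{\mu}(\mathbf{x}^{k_{j}}))$ to conclude $\mathbf{x}^{\ast}=H_{\lambda\mu}(B_{\mu}(\mathbf{x}^{\ast}))$, i.e., $\mathbf{x}^{\ast}$ is a fixed point of the iteration (\ref{equ22}).

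The main obstacle is precisely this limit passage, because $H_{\lambda\mu}$ is discontinuous at coordinates where $|(B_{\mu}(\mathbf{x}^{\ast}))_{i}|=t_{\lambda\mu}$. To handle it, I would argue by contradiction that no coordinate of $B_{\mu}(\mathbf{x}^{\ast})$ can sit exactly on the threshold: if it did, then along the subsequence the coordinate $\mathbf{x}^{k_{j}+1}_{i}$ would have to oscillate between $0$ and a value near $g_{\lambda\mu}(t_{\lambda\mu})$, violating $\mathbf{x}^{k_{j}+1}\to\mathbf{x}^{\ast}$ (here one uses that $g_{\lambda\mu}$ jumps at the threshold, as can be read off (\ref{equ8})--(\ref{equ10})). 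Away from the threshold $H_{\lambda\mu}$ is continuous, so the fixed-point relation is obtained. Finally, to promote subsequential convergence to convergence of the full sequence, I would combine asymptotic regularity with the fact that the set of fixed points has only finitely many values that the iterates can cluster at within the current support pattern (the support stabilizes eventually, because once $\mathbf{x}^{k_{j}}$ is close enough to $\mathbf{x}^{\ast}$ and thresholds are strict, $\mathrm{supp}(\mathbf{x}^{k})$ is locally constant, after which the iteration acts continuously and the Opial-type argument using $\|\mathbf{x}^{k+1}-\mathbf{x}^{k}\|_{2}\to 0$ forces convergence to $\mathbf{x}^{\ast}$).
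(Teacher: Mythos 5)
The paper itself gives no proof of Theorem~\ref{th2}: it is restated from \cite{Li2019}, and the companion result Theorem~\ref{th5} is likewise dispatched by reference to \cite{Peng2017}. Your majorization--minimization template is exactly the one those references use, and parts (1) and (2) are correct and complete as you wrote them: the chain $\mu\mathcal{C}_{\lambda}(\mathbf{x}^{k+1})\le\mathcal{C}_{\lambda,\mu}(\mathbf{x}^{k+1},\mathbf{x}^{k})\le\mathcal{C}_{\lambda,\mu}(\mathbf{x}^{k},\mathbf{x}^{k})=\mu\mathcal{C}_{\lambda}(\mathbf{x}^{k})$ uses only (\ref{equ13}) and the fact (from Lemma~\ref{lem1} applied coordinatewise to (\ref{equ15})) that $\mathbf{x}^{k+1}$ globally minimizes the surrogate, and the quantitative gap $(1-\mu\|\mathbf{A}\|_{2}^{2})\|\mathbf{x}^{k+1}-\mathbf{x}^{k}\|_{2}^{2}\le\mu\bigl(\mathcal{C}_{\lambda}(\mathbf{x}^{k})-\mathcal{C}_{\lambda}(\mathbf{x}^{k+1})\bigr)$ telescopes correctly because Scheme~1 takes $\mu$ strictly below $\|\mathbf{A}\|_{2}^{-2}$.

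The genuine gap is the boundedness step in part (3), and your proposed route does not close it. Unlike the $\ell_{1/2}$ penalty, $P_{a}$ is bounded above by $n$, and $\mathbf{A}$ has a nontrivial null space, so $\mathcal{C}_{\lambda}$ is not coercive and its level sets are not bounded in general; likewise the estimate $\|\mathbf{x}^{k+1}\|_{\infty}\le\|B_{\mu}(\mathbf{x}^{k})\|_{\infty}\le\|\mathbf{x}^{k}\|_{\infty}+C$ coming from ``$B_{\mu}(\mathbf{x}^{k})$ is $\mathbf{x}^{k}$ plus a bounded perturbation'' permits linear growth and proves nothing. The fix is to use the Scheme-1 hypothesis $\lambda=\lambda_{0}>\|\mathbf{b}\|_{2}^{2}$ together with part (1): starting from $\mathbf{x}^{0}=\mathbf{0}$ (so $\mathcal{C}_{\lambda}(\mathbf{x}^{0})=\|\mathbf{b}\|_{2}^{2}$), monotonicity gives $\lambda P_{a}(\mathbf{x}^{k})\le\|\mathbf{b}\|_{2}^{2}<\lambda$, hence $\frac{a|\mathbf{x}^{k}_{i}|}{a|\mathbf{x}^{k}_{i}|+1}\le\|\mathbf{b}\|_{2}^{2}/\lambda<1$ for every coordinate, which inverts to the uniform bound $|\mathbf{x}^{k}_{i}|\le\|\mathbf{b}\|_{2}^{2}/\bigl(a(\lambda-\|\mathbf{b}\|_{2}^{2})\bigr)$; this is precisely why Scheme~1 restricts $\lambda_{0}$ to $(\|\mathbf{b}\|_{2}^{2},\bar{\lambda})$, and without invoking that restriction (or some substitute) boundedness is simply not available. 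The remaining soft spots you flag yourself --- ruling out cluster points whose $B_{\mu}$-image sits exactly on the threshold $t_{\lambda\mu}$, and upgrading subsequential to full-sequence convergence via support stabilization --- are real but are handled at the same level of rigor (or less) in the works the paper cites, so identifying them and sketching the jump-discontinuity and Opial-type arguments is an acceptable resolution.
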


Due to the non-convexity of the objective function in (\ref{equ14}), the sequence $\{\mathbf{x}^{k}\}$ generated by IFPTA-S1 always converges
to a local minimizer of the non-convex problem (\ref{equ14}). How to find the unique global minimizer for the non-convex minimization problem (\ref{equ14}) is a really challenging
problem. Moreover, we can find that the regularized parameter $\lambda$ and parameter $a$ in IFPTA-S1 are two fixed given parameters, and how to choose the best regularized parameter $\lambda$
and parameter $a$ for the iterative FP thresholding algorithm-Scheme 1 is a very hard problem. Although the iterative FP thresholding algorithm-Scheme 2 is adaptive for the choice of the
regularized parameter $\lambda$, the parameter $a$ which influences the behaviour of non-convex fraction function, needs to be determined manually in every simulation, and how to determine
the proper parameter $a$ is still a very hard problem.

\begin{algorithm}[h!]
\caption{: Iterative FP thresholding algorithm-Scheme 2 (IFPTA-S2)}
\label{alg:B}
\begin{algorithmic}
\STATE {\textbf{Input}: $\mathbf{A}\in\mathbb{R}^{m\times n}$, $\mathbf{b}\in \mathbb{R}^{m}$, $\mu=\mu_{0}\in(0,\|\mathbf{A}\|_{2}^{-2})$, $a=a_{0}$ ($a_{0}>0$ is a given positive number) and $\epsilon>0$
($\epsilon$ is a small positive number such as 0.1,0.01 or 0.001);}
\STATE {\textbf{Initialize}: Given $\mathbf{x}^{0}\in \mathbb{R}^{n}$;}
\STATE {\textbf{while} not converged \textbf{do}}
\STATE \ {$B_{\mu}(\mathbf{x}^{k})=\mathbf{x}^{k}+\mu \mathbf{A}^{\top}(\mathbf{b}-\mathbf{A}\mathbf{x}^{k})$;}
\STATE \ {$\lambda_{1,k}=\frac{2|B_{\mu}(\mathbf{x}^{k})|_{r+1}}{a\mu}$, $\lambda_{2,k}=\frac{(1-\epsilon)(2a|B_{\mu}(\mathbf{x}^{k})|_{r}+1)^{2}}{4a^{2}\mu}$;}
\STATE \ {\textbf{if}\ $\lambda_{1,k}\leq\frac{1}{a^{2}\mu}$\ \textbf{then}}
\STATE \ \ \ \ {$\lambda=\lambda_{1,k}$, $t_{\lambda\mu}=\frac{\lambda\mu a}{2}$;}
\STATE \ \ \ \  {for\ $i=1:\mathrm{length}(\mathbf{x})$}
\STATE \ \ \ \ \ \ {if $|(B_{\mu}(\mathbf{x}^{k}))_{i}|>t_{\lambda\mu}$, then $\mathbf{x}_{i}^{k+1}=g_{\lambda\mu}((B_{\mu}(\mathbf{x}^{k}))_{i})$;}
\STATE \ \ \ \ \ \ {if $|(B_{\mu}(\mathbf{x}^{k}))_{i}|\leq t_{\lambda\mu}$, then $\mathbf{x}_{i}^{k+1}=0$;}
\STATE \ {\textbf{else}}
\STATE \ \ \ \ {$\lambda=\lambda_{2,k}$, $t_{\lambda\mu}=\sqrt{\lambda\mu}-\frac{1}{2a}$;}
\STATE \ \ \ \ {for\ $i=1:\mathrm{length}(\mathbf{x})$}
\STATE \ \ \ \ \ \ {if $|(B_{\mu}(\mathbf{x}^{k}))_{i}|>t_{\lambda\mu}$, then $\mathbf{x}_{i}^{k+1}=g_{\lambda\mu}((B_{\mu}(\mathbf{x}^{k}))_{i})$;}
\STATE \ \ \ \ \ \ {if $|(B_{\mu}(\mathbf{x}^{k}))_{i}|\leq t_{\lambda\mu}$, then $\mathbf{x}_{i}^{k+1}=0$;}
\STATE \ \ {\textbf{end}}
\STATE \ \ {$k\rightarrow k+1$;}
\STATE{\textbf{end while}}
\STATE{\textbf{Output}: $\mathbf{x}^{\ast}$}
\end{algorithmic}
\end{algorithm}

\section{Convex iterative FP thresholding algorithm for solving the problem $(FP^{\lambda}_{a})$} \label{section3}

In this section, we will generate a convex iterative FP thresholding algorithm to solve the problem $(FP^{\lambda}_{a})$. Before we giving out the analytic expression of our convex iterative
FP thresholding algorithm, some crucial results need to be proved for later use.

\begin{theorem}\label{the3}
For any $0<a\leq\frac{1}{\sqrt{\lambda}}$, the function $f_{\lambda}(\beta)$ defined in equation (\ref{equ6}) is strictly convex.
\end{theorem}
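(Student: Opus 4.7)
The plan is to prove strict convexity by applying the second-derivative test on each of the two open half-lines $\beta>0$ and $\beta<0$ separately, and then glue the two halves together across the corner at $\beta=0$ via one-sided derivatives. First, since $\rho_a(\beta)=1-1/(a|\beta|+1)$ is even and $C^{\infty}$ on $\mathbb{R}\setminus\{0\}$, direct differentiation gives, for $\beta\neq 0$,
$$f_\lambda''(\beta)=2-\frac{2\lambda a^{2}}{(a|\beta|+1)^{3}}.$$
Under the hypothesis $0<a\leq 1/\sqrt{\lambda}$ we have $\lambda a^{2}\leq 1<(a|\beta|+1)^{3}$ whenever $\beta\neq 0$, hence $f_\lambda''(\beta)>0$ strictly on each of $(0,\infty)$ and $(-\infty,0)$. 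In particular, $f_\lambda$ is strictly convex on each of these open half-lines.

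Next, to transfer strict convexity across the non-smooth point $\beta=0$, I would compute the one-sided derivatives there. Using $\rho_a'(\beta)=a/(a\beta+1)^{2}$ for $\beta>0$ together with the symmetry $\rho_a(-\beta)=\rho_a(\beta)$, one obtains
$$f_\lambda'(0^{+})=-2\gamma+\lambda a,\qquad f_\lambda'(0^{-})=-2\gamma-\lambda a,$$
so the right-derivative at $0$ exceeds the left-derivative by the strictly positive amount $2\lambda a>0$. Since $f_\lambda$ is continuous on $\mathbb{R}$, strictly convex on each open half-line (so its right-derivative is strictly increasing on each of $(-\infty,0)$ and $[0,\infty)$), and exhibits a strict upward jump of the right-derivative at $0$, the map $\beta\mapsto f_\lambda'(\beta^{+})$ is strictly increasing on all of $\mathbb{R}$. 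By the standard characterization of strict convexity (strict monotonicity of the right-derivative of a continuous real function implies strict convexity), $f_\lambda$ is strictly convex on $\mathbb{R}$.

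The main obstacle is the non-smoothness of $\rho_a$ at $\beta=0$: a pure $f_\lambda''>0$ argument is unavailable there, and one must supplement it by verifying the strict upward jump of the one-sided derivative. The hypothesis $a\leq 1/\sqrt{\lambda}$ is used precisely on the smooth part: it is exactly the threshold at which the concavity of $\lambda\rho_a$ near $\beta=0$, captured by the lower bound $2-2\lambda a^{2}$ of $f_\lambda''$, fails to overwhelm the convexity contribution $2$ coming from $(\beta-\gamma)^{2}$.
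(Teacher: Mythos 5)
Your proof is correct and follows essentially the same route as the paper's: strict positivity of $f_{\lambda}''(\beta)=2-\frac{2\lambda a^{2}}{(a|\beta|+1)^{3}}$ on $\mathbb{R}\setminus\{0\}$ under $\lambda a^{2}\leq 1$, combined with the upward jump of the derivative at the kink coming from $\rho_{a}'(0^{+})=a>-a=\rho_{a}'(0^{-})$. In fact you supply the details the paper only gestures at, namely the explicit one-sided derivatives $f_{\lambda}'(0^{\pm})=-2\gamma\pm\lambda a$ and the invocation of the monotone-right-derivative criterion to glue the two half-lines together.
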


\begin{proof} It is clear to see that the function
$$f_{\lambda}(\beta)=(\beta-\gamma)^{2}+\lambda\rho_{a}(\beta)$$
is differentiable on $\mathbb{R}\backslash\{0\}$. For $\beta\neq 0$, the derivative of $f_{\lambda}(\beta)$ is given by
\begin{equation}\label{equ27}
f^{\prime}_{\lambda}(\beta)=2(\beta-\gamma)+\frac{\lambda a}{(a|\beta|+1)^{2}}\mathrm{sign}(\beta), \ \ \beta\neq 0.
\end{equation}
Let us find the range of $a$ for which $f_{\lambda}(\beta)$ is convex. Notice that
\begin{equation}\label{equ28}
\rho^{\prime}_{a}(0^{+})=a>-a=\rho^{\prime}_{a}(0^{-})
\end{equation}
and
\begin{equation}\label{equ29}
f^{\prime\prime}_{\lambda}(\beta)=2-\frac{2\lambda a^{2}}{(a\beta+1)^{3}}, \ \ \ \forall \ \beta>0.
\end{equation}
When we set $0<a\leq\frac{1}{\sqrt{\lambda}}$, the function $f^{\prime}_{\lambda}(\beta)$ defined in (\ref{equ27}) is increasing, then the function $f_{\lambda}(\beta)$ in (\ref{equ6})
is strictly convex. This completes the proof.
\end{proof}

\begin{figure}[h!]
 \centering
 \includegraphics[width=0.4\textwidth]{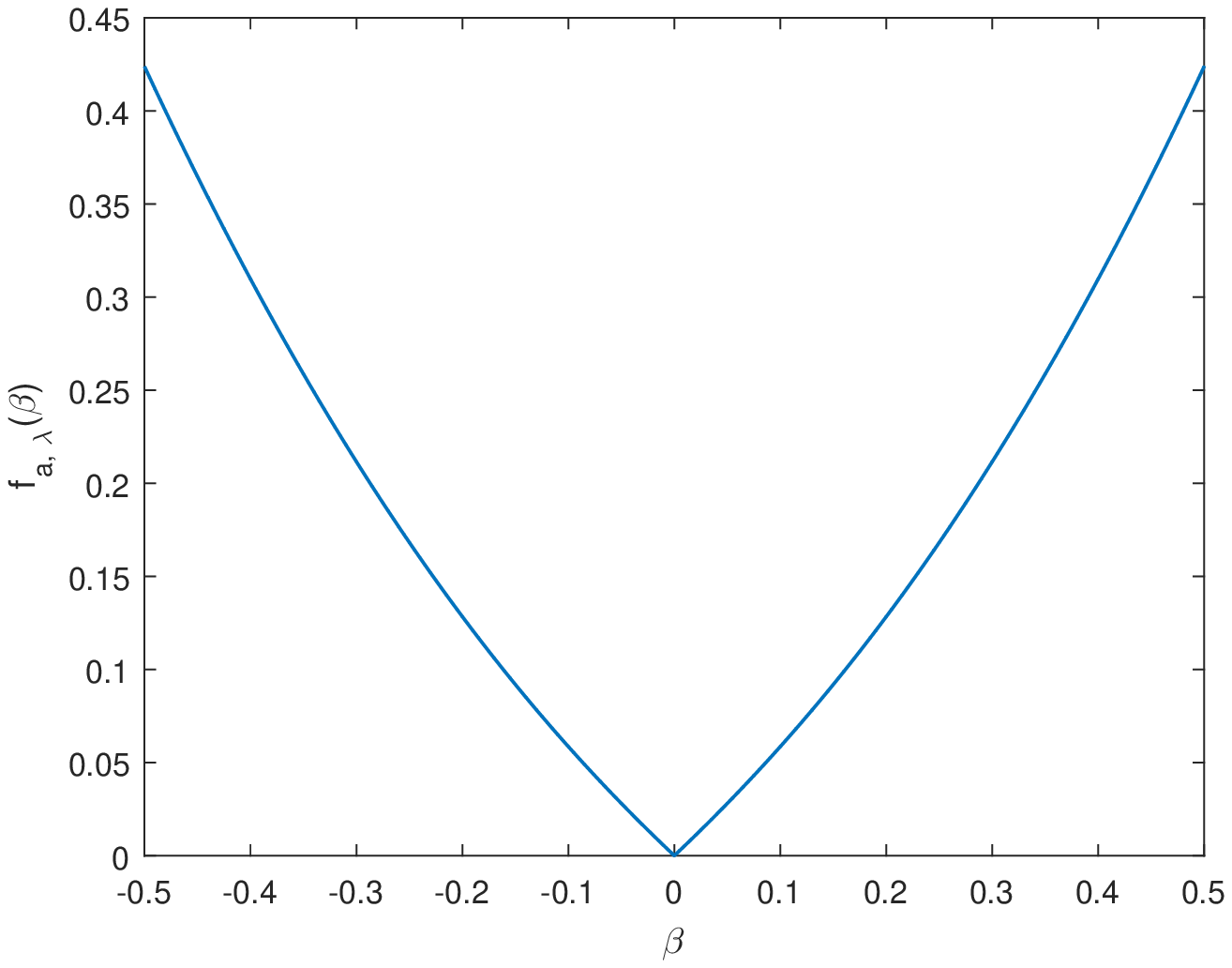}
\caption{The behavior of the function $f_{\lambda}(\beta)=(\beta-\gamma)^{2}+\lambda\rho_{a}(\beta)$ with $\lambda=0.49$, $a=1.1$ and $\gamma=0$.}
\label{fig:1}
\end{figure}

The graph presented in Fig. \ref{fig:1} show the plot of the function $f_{\lambda}(\beta)=(\beta-\gamma)^{2}+\lambda\rho_{a}(\beta)$ with $\lambda=0.49$, $a=1.1$
and $\gamma=0$. It can be seen in Fig.\ref{fig:1} that the function $f_{\lambda}(\beta)=(\beta-\gamma)^{2}+\lambda\rho_{a}(\beta)$ is strictly convex for $\lambda=0.49$
and $a=1.1$, even though the penalty function $\rho_{a}(\beta)$ is not convex. However, when $\lambda=0.49$ and $a=50$, the graph presented in Fig. \ref{fig:2}
show that the function $f_{\lambda}(\beta)=(\beta-\gamma)^{2}+\lambda\rho_{a}(\beta)$ is non-convex.

Theorem \ref{the3} tell us that the convexity of the function $f_{\lambda}(\beta)$ defined in (\ref{equ6}) can be ensured by constraining the parameter $a$ as $0<a\leq\frac{1}{\sqrt{\lambda}}$
in the non-convex fraction function $\rho_{a}(\beta)$, which means that there exist the unique global minimizer to $f_{\lambda}(\beta)$ when we set $0<a\leq\frac{1}{\sqrt{\lambda}}$. Combined with 
Lemma \ref{lem1}, the unique global minimizer to $f_{\lambda}(\beta)$ can be expressed in the following theorem.

\begin{figure}[h!]
 \centering
 \includegraphics[width=0.4\textwidth]{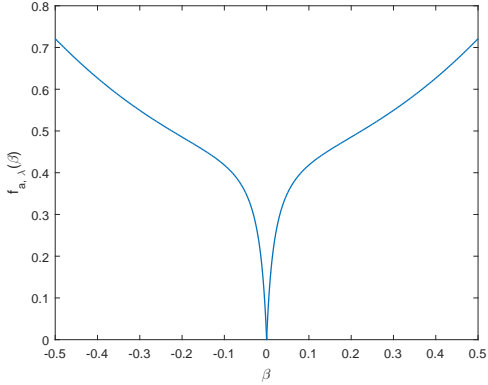}
\caption{The behavior of the function $f_{\lambda}(\beta)=(\beta-\gamma)^{2}+\lambda\rho_{a}(\beta)$ with $\lambda=0.49$, $a=50$ and $\gamma=0$.}
\label{fig:2}
\end{figure}

\begin{theorem}\label{the4}
Suppose $0<a\leq\frac{1}{\sqrt{\lambda}}$, the unique global optimal solution to $\min_{\beta\in \mathbb{R}}f_{\lambda}(\beta)$ can be given by
\begin{equation}\label{equ30}
\begin{array}{llll}
\beta^{\ast}&=&h_{\lambda}(\gamma)\\
&=&\left\{
    \begin{array}{ll}
      g_{\lambda}(\gamma), & \ \ \mathrm{if} \ {|\gamma|>t_{\lambda};} \\
      0, & \ \ \mathrm{if} \ {|\gamma|\leq t_{\lambda}.}
    \end{array}
  \right.
  \end{array}
\end{equation}
where $g_{\lambda}$ is defined in (\ref{equ8}) and the threshold value $t_{\lambda}$ satisfies
\begin{equation}\label{equ31}
t_{\lambda}=\frac{\lambda a}{2}.
\end{equation}
\end{theorem}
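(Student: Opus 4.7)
The plan is to combine Theorem \ref{the3} with Lemma \ref{lem1} in a direct way, since almost all of the heavy lifting has already been done. First I would invoke Theorem \ref{the3}: under the hypothesis $0 < a \leq \frac{1}{\sqrt{\lambda}}$, the function $f_{\lambda}(\beta)$ is strictly convex on $\mathbb{R}$. Strict convexity together with coercivity (immediate from the quadratic term $(\beta-\gamma)^{2}$ and the boundedness of $\rho_{a}$) guarantees that $\min_{\beta\in\mathbb{R}} f_{\lambda}(\beta)$ is attained at a \emph{unique} point, which establishes the uniqueness claim.

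Next I would appeal to Lemma \ref{lem1} to identify this unique minimizer. Lemma \ref{lem1} already asserts that every optimal solution to $\min_{\beta\in\mathbb{R}} f_{\lambda}(\beta)$ has the thresholding form $h_{\lambda}(\gamma)$ described in (\ref{equ7}), with $g_{\lambda}$ as in (\ref{equ8}) and the threshold split into two regimes according to whether $\lambda \leq \frac{1}{a^{2}}$ or $\lambda > \frac{1}{a^{2}}$. The observation that completes the argument is the equivalence
\begin{equation*}
0 < a \leq \frac{1}{\sqrt{\lambda}} \iff a^{2}\lambda \leq 1 \iff \lambda \leq \frac{1}{a^{2}},
\end{equation*}
so the hypothesis of Theorem \ref{the4} places us automatically in the first branch of the threshold formula (\ref{equ10}). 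Consequently $t_{\lambda} = \frac{\lambda a}{2}$, and the case split in (\ref{equ7}) collapses to the single expression (\ref{equ30}) with threshold (\ref{equ31}).

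Since neither of these steps requires any fresh computation, there is no genuine obstacle: the only subtlety is to note that the convexity regime $0 < a \leq \frac{1}{\sqrt{\lambda}}$ is exactly the regime $\lambda \leq \frac{1}{a^{2}}$ in which Lemma \ref{lem1} already uses the simpler threshold value $\frac{\lambda a}{2}$. The whole proof should therefore consist of one sentence invoking Theorem \ref{the3} for uniqueness and one sentence invoking Lemma \ref{lem1} together with the equivalence above for the explicit form of the minimizer and its threshold.
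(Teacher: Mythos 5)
Your proposal is correct and follows essentially the same route as the paper: the paper's proof likewise observes that $0<a\leq\frac{1}{\sqrt{\lambda}}$ is equivalent to $\lambda\leq\frac{1}{a^{2}}$, so the threshold in Lemma \ref{lem1} collapses to $\frac{\lambda a}{2}$, and derives uniqueness from the strict convexity established in Theorem \ref{the3}. Your added remark on coercivity to guarantee existence of the minimizer is a small but welcome extra precision.
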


\begin{proof}
We can see that the condition $0<a\leq\frac{1}{\sqrt{\lambda}}$ implies that $\lambda\leq\frac{1}{a^{2}}$. Under this condition, the threshold value $t_{\lambda}$ defined in (\ref{equ10}) is
simplified to just one threshold parameter $\frac{\lambda a}{2}$. The `unique global' is obtained by the strictly convexity of the function $f_{\lambda}(\beta)$ under the condition
$0<a\leq\frac{1}{\sqrt{\lambda}}$. This completes the proof.
\end{proof}

In addition, by Theorem \ref{the3}, we can also get that the objective functions in (\ref{equ15}) and (\ref{equ14}) are all strictly convex functions under the condition
$0<a\leq\frac{1}{\sqrt{\lambda\mu}}$. Therefore, for any fixed $\lambda>0$, $\mu>0$ and $a\in(0,\frac{1}{\sqrt{\lambda\mu}}]$, there exist the unique global minimizers to the
problems (\ref{equ15}) and (\ref{equ14}).

In the following, we shall present a convex iterative FP thresholding algorithm to solve the problem $(FP^{\lambda}_{a})$ under the condition $0<a\leq\frac{1}{\sqrt{\lambda\mu}}$.

Similar as the generation of the iterative FP thresholding algorithm (iteration (\ref{equ22})), under the condition $0<a\leq\frac{1}{\sqrt{\lambda\mu}}$, the iterative FP thresholding algorithm for
solving the problem $(FP^{\lambda}_{a})$ can be rewritten as
\begin{equation}\label{equ32}
\mathbf{x}^{k+1}=H_{\lambda\mu}(B_{\mu}(\mathbf{x}^{k})),
\end{equation}
where
\begin{equation}\label{equ33}
H_{\lambda\mu}(B_{\mu}(\mathbf{x}^{k}))
=\big(h_{\lambda\mu}((B_{\mu}(\mathbf{x}^{k}))_{1}),\cdots,h_{\lambda\mu}((B_{\mu}(\mathbf{x}^{k}))_{n})\big)^{\top}
\end{equation}
with
\begin{equation}\label{equ34}
h_{\lambda\mu}((B_{\mu}(\mathbf{x}^{k}))_{i})=\left\{
    \begin{array}{ll}
      g_{\lambda\mu}((B_{\mu}(\mathbf{x}^{k}))_{i}), & \mathrm{if} \ {|(B_{\mu}(\mathbf{x}^{k}))_{i}|>t_{\lambda\mu};} \\
      0, & \mathrm{if} \ {|(B_{\mu}(\mathbf{x}^{k}))_{i}|\leq t_{\lambda\mu}.}
    \end{array}
  \right.
\end{equation}
and the threshold value $t_{\lambda\mu}$ satisfies
\begin{equation}\label{equ35}
t_{\lambda\mu}=\frac{\lambda\mu a}{2}
\end{equation}
which is obtained by replacing $\lambda$ with $\lambda\mu$ in $t_{\lambda}$, and the $t_{\lambda}$ here is defined in (\ref{equ31}). Due to the convexity of the objective 
function in (\ref{equ14}) under the condition $0<a\leq\frac{1}{\sqrt{\lambda\mu}}$, we call the thresholding algorithm which is defined in (\ref{equ32}) the convex iterative 
FP thresholding algorithm.

\begin{remark}\label{remak1}
The $g_{\lambda\mu}$ in this paper are all obtained by replacing $\lambda$ with $\lambda\mu$ in $g_{\lambda}$, i.e.,
\begin{equation}\label{equ36}
g_{\lambda\mu}(\gamma)=
\mathrm{sign}(\gamma)\bigg(\frac{\frac{1+a|\gamma|}{3}(1+2\cos(\frac{\phi_{\lambda\mu}(\gamma)}{3}-\frac{\pi}{3}))-1}{a}\bigg)
\end{equation}
with
\begin{equation}\label{equ37}
\phi_{\lambda\mu}(\gamma)=\arccos\Big(\frac{27\lambda\mu a^{2}}{4(1+a|\gamma|)^{3}}-1\Big).
\end{equation}
\end{remark}

Similar as the iterative FP thresholding algorithm, the the quality of our convex iterative FP thresholding algorithm also depends seriously on the setting of regularized
parameter $\lambda$ and parameter $a$, and how to select the proper parameters $\lambda$ and $a$ in our convex iterative FP thresholding algorithm is also a very hard problem.
In detailed applications, the parameters $\lambda$ and $a$ must be carefully chosen. In the following description, we will generate an adaptive rule for the choice of the
parameters $\lambda$ and $a$ in our convex iterative FP thresholding algorithm. When doing so, our convex iterative FP thresholding algorithm will be adaptive both for the choice of
the regularized parameter $\lambda$ and parameter $a$.

\emph{1) Adaptive for the choice of parameter $a$:} Note that the parameter $a$ in convex iterative FP thresholding algorithm should be satisfied $a\in(0,\frac{1}{\sqrt{\lambda\mu}}]$. Therefore,
we can choose the parameter $a$ as
\begin{equation}\label{equ38}
a=\frac{\tau}{\sqrt{\lambda\mu}},
\end{equation}
where $\tau\in(0,1]$ is a given positive number. When we set $a=\frac{\tau}{\sqrt{\lambda\mu}}$, the threshold value $t_{\lambda\mu}$ in (\ref{equ35}) can be rewritten as
\begin{equation}\label{equ39}
t_{\lambda\mu}=\frac{\tau\sqrt{\lambda\mu}}{2}.
\end{equation}
To see clear that once the value of the regularized parameter $\lambda$ is determined, the parameter $a$ can be given by (\ref{equ38}), and therefore the convex iterative FP thresholding
algorithm will be adaptive for the choice of the parameter $a$. For the choice of the proper regularized parameter $\lambda$ , here, the rule which is used to select the proper regularized 
parameter $\lambda$ in our previous proposed IFPTA-S2 is again used to select the proper regularized parameter $\lambda$ in our convex iterative FP thresholding algorithm. 

\emph{2) Adaptive for the choice of regularized parameter $\lambda$:} Let the vector $\mathbf{x}^{\ast}$ of sparsity $r$ be the optimal solution to the problem $(FP^{\lambda}_{a})$. Then, 
the following inequalities hold
$$|B_{\mu}(x^{\ast})|_{i}>t_{\lambda\mu}=\frac{\tau\sqrt{\lambda\mu}}{2}\Leftrightarrow i\in\{1,2,\cdots,r\},$$
$$|B_{\mu}(x^{\ast})|_{j}\leq t_{\lambda\mu}=\frac{\tau\sqrt{\lambda\mu}}{2}\Leftrightarrow j\in\{r+1,r+2,\cdots,n\}$$
which implies that
\begin{equation}\label{equ40}
\frac{4|B_{\mu}(\mathbf{x}^{\ast})|^{2}_{r+1}}{\tau^{2}\mu}\leq\lambda<\frac{4|B_{\mu}(\mathbf{x}^{\ast})|^{2}_{r}}{\tau^{2}\mu}.
\end{equation}
The above estimation provides an exact location of the regularized parameter $\lambda$. A most reliable choice of the proper regularized parameter $\lambda$ specified by
\begin{equation}\label{equ41}
\begin{array}{llll}
\lambda^{\ast}&=&\displaystyle\frac{4|B_{\mu}(\mathbf{x}^{\ast})|^{2}_{r+1}}{\tau^{2}\mu}\\
&&+\displaystyle\min\bigg\{\zeta, c\Big(\frac{4|B_{\mu}(\mathbf{x}^{\ast})|^{2}_{r}}{\tau^{2}\mu}-\frac{4|B_{\mu}(\mathbf{x}^{\ast})|^{2}_{r+1}}{\tau^{2}\mu}\Big)\bigg\},
\end{array}
\end{equation}
where $\zeta>0$ and $c\in[0, 1)$. Clearly, the parameter $\lambda^{\ast}$ defined in (\ref{equ41}) satisfies the inequalities (\ref{equ40}). Combing with (\ref{equ41}) and (\ref{equ38}), 
the proper parameter $a$ for our convex iterative FP thresholding algorithm can be selected as
\begin{equation}\label{equ42}
a^{\ast}=\frac{\tau}{\sqrt{\lambda^{\ast}\mu}}. 
\end{equation}

In each iteration, we can approximate the optimal solution $\mathbf{x}^{\ast}$ by $\mathbf{x}^{k}$. Then, in each iteration, the proper regularized parameter $\lambda$ and parameter $a$ 
for our convex iterative FP thresholding algorithm can be selected as
\begin{equation}\label{equ43}
\begin{array}{llll}
\lambda_{k}^{\ast}&=&\displaystyle\frac{4|B_{\mu}(\mathbf{x}^{k})|^{2}_{r+1}}{\tau^{2}\mu}\\
&&+\displaystyle\min\bigg\{\zeta, c\Big(\frac{4|B_{\mu}(\mathbf{x}^{k})|^{2}_{r}}{\tau^{2}\mu}-\frac{4|B_{\mu}(\mathbf{x}^{k})|^{2}_{r+1}}{\tau^{2}\mu}\Big)\bigg\}
\end{array}
\end{equation}
and
\begin{equation}\label{equ44}
a^{\ast}_{k}=\frac{\tau}{\sqrt{\lambda^{\ast}_{k}\mu}}. 
\end{equation}

By above operations, our convex iterative FP thresholding algorithm will be adaptive for the choice of the regularized parameter $\lambda$ and parameter $a$ in each iteration.

Similar as the iterative FP thresholding algorithm, incorporated with different parameter-setting strategies, iteration (\ref{equ32}) can also defines different
implementation schemes of the convex iterative FP thresholding algorithm in each iteration, i.e.,

Scheme 1: $\mu=\mu_{0}\in(0, \|\mathbf{A}\|_{2}^{-2})$; $\lambda=\lambda_0$ ($\lambda_0>0$ is a given positive number), and $a=a_{0}\in(0,\frac{1}{\sqrt{\lambda\mu}}]$.

Scheme 2: $\mu=\mu_{0}\in(0, \|\mathbf{A}\|_{2}^{-2})$; $\lambda=\lambda^{\ast}_{k}$ defined in (\ref{equ43}), and $a=a^{\ast}_{k}$ defined in (\ref{equ44}).

It is worth noting that, in some $k$-th iterations, the values of $\lambda^{\ast}_{k}$ defined in (\ref{equ43}) may be equal to zero which lead to the infinite numbers of $a^{\ast}_{k}$.
In fact, in $k$-th iteration, if $\lambda=\lambda^{\ast}_{k}=0$, the fraction function $\rho_{a}$ in (\ref{equ14}) will be vanished for any choice of $a>0$. Under this circumstance,
we can choose the parameter $a$ as a random given positive number $\hat{a}>0$. That is to say, in $k$-th iteration, if $\lambda^{\ast}_{k}$ defined in (\ref{equ43}) equals to zero, we can set
the parameter $a$ as a random given positive number $\hat{a}>0$. 

The above analysis also leads to two schemes of the convex iterative FP thresholding algorithm. One is convex iterative FP thresholding algorithm-Scheme 1 (which is named CIFPTA-S1 for 
short in this paper) and the other is convex iterative FP thresholding algorithm-Scheme 2 (which is named CIFPTA-S2 for short in this paper). We summarized these two algorithms in 
Algorithm \ref{alg:C} and Algorithm \ref{alg:D}.

\begin{algorithm}[h!]
\caption{: Convex iterative FP thresholding algorithm-Scheme 1 (CIFPTA-S1)}
\label{alg:C}
\begin{algorithmic}
\STATE {\textbf{Input}: $\mathbf{A}\in\mathbb{R}^{m\times n}$, $\mathbf{b}\in \mathbb{R}^{m}$, $\mu=\mu_{0}\in(0, \|\mathbf{A}\|_{2}^{-2})$, $\lambda=\lambda_0$ ($\lambda_0>0$ is a 
given positive number) and $a=a_{0}\in(0,\frac{1}{\sqrt{\lambda\mu}}]$;}
\STATE {\textbf{Initialize}: Given $\mathbf{x}^{0}\in \mathbb{R}^{n}$;}
\STATE {\textbf{while} not converged \textbf{do}}
\STATE \ {$B_{\mu}(\mathbf{x}^{k})=\mathbf{x}^{k}+\mu \mathbf{A}^{\top}(\mathbf{b}-\mathbf{A}\mathbf{x}^{k})$;}
\STATE \ {$t_{\lambda\mu}=\frac{\lambda\mu a}{2}$;}
\STATE \ \ \ \ {for\ $i=1:\mathrm{length}(\mathbf{x})$}
\STATE \ \ \ \ \ \ {if $|(B_{\mu}(\mathbf{x}^{k}))_{i}|>t_{\lambda\mu}$, then $\mathbf{x}_{i}^{k+1}=g_{\lambda\mu}((B_{\mu}(\mathbf{x}^{k}))_{i})$;}
\STATE \ \ \ \ \ \ {if $|(B_{\mu}(\mathbf{x}^{k}))_{i}|\leq t_{\lambda\mu}$, then $\mathbf{x}_{i}^{k+1}=0$;}
\STATE \ {$k\rightarrow k+1$;}
\STATE{\textbf{end while}}
\STATE{\textbf{Output}: $\mathbf{x}^{\ast}$}
\end{algorithmic}
\end{algorithm}

\begin{algorithm}[h!]
\caption{: Convex iterative FP thresholding algorithm-Scheme 2 (CIFPTA-S2)}
\label{alg:D}
\begin{algorithmic}
\STATE {\textbf{Input}: $\mathbf{A}\in\mathbb{R}^{m\times n}$, $\mathbf{b}\in \mathbb{R}^{m}$, $\mu=\mu_{0}\in(0, \|\mathbf{A}\|_{2}^{-2})$, $\tau\in(0,1]$, $\hat{a}>0$ is a random given positive number,
$\zeta>0$ and $c\in[0, 1)$;}
\STATE {\textbf{Initialize}: Given $\mathbf{x}^{0}\in \mathbb{R}^{n}$;}
\STATE {\textbf{while} not converged \textbf{do}}
\STATE \ {$B_{\mu}(\mathbf{x}^{k})=\mathbf{x}^{k}+\mu \mathbf{A}^{\top}(\mathbf{b}-\mathbf{A}\mathbf{x}^{k})$;}
\STATE \ {$\lambda^{\ast}_{k}=\frac{4|B_{\mu}(\mathbf{x}^{k})|^{2}_{r+1}}{\tau^{2}\mu}+\min\Big\{\zeta, c\big(\frac{4|B_{\mu}(\mathbf{x}^{k})|^{2}_{r}}{\tau^{2}\mu}-\frac{4|B_{\mu}(\mathbf{x}^{k})|^{2}_{r+1}}{\tau^{2}\mu}\big)\Big\}$;}
\STATE \ \ \ {\textbf{if}\ $\lambda^{\ast}_{k}\neq 0$\ \textbf{then}}
\STATE \ \ \ \ \ \ {$\lambda=\lambda^{\ast}_{k}$, $a=\frac{\tau}{\sqrt{\lambda^{\ast}_{k}\mu}}$, $t_{\lambda\mu}=\frac{\tau\sqrt{\lambda\mu}}{2}$;}
\STATE \ \ \ \ \ \ {for\ $i=1:\mathrm{length}(\mathbf{x})$}
\STATE \ \ \ \ \ \ \ \ {if $|(B_{\mu}(\mathbf{x}^{k}))_{i}|>t_{\lambda\mu}$, then $\mathbf{x}_{i}^{k+1}=g_{\lambda\mu}((B_{\mu}(\mathbf{x}^{k}))_{i})$;}
\STATE \ \ \ \ \ \ \ \ {if $|(B_{\mu}(\mathbf{x}^{k}))_{i}|\leq t_{\lambda\mu}$, then $\mathbf{x}_{i}^{k+1}=0$;}
\STATE \ \ \ \ {\textbf{else}}
\STATE \ \ \ \ \ \ {$\lambda=0$, $a=\hat{a}$, $t_{\lambda\mu}=\frac{\tau\sqrt{\lambda\mu}}{2}$;}
\STATE \ \ \ \ \ \ {for\ $i=1:\mathrm{length}(\mathbf{x})$}
\STATE \ \ \ \ \ \ \ \ {if $|(B_{\mu}(\mathbf{x}^{k}))_{i}|>t_{\lambda\mu}$, then $\mathbf{x}_{i}^{k+1}=g_{\lambda\mu}((B_{\mu}(\mathbf{x}^{k}))_{i})$;}
\STATE \ \ \ \ \ \ \ \ {if $|(B_{\mu}(\mathbf{x}^{k}))_{i}|\leq t_{\lambda\mu}$, then $\mathbf{x}_{i}^{k+1}=0$;}
\STATE \ \ \ \ {\textbf{end}}
\STATE \ {$k\rightarrow k+1$;}
\STATE{\textbf{end while}}
\STATE{\textbf{Output}: $\mathbf{x}^{\ast}$}
\end{algorithmic}
\end{algorithm}

At the end of this section, we justify the convergence of the CIFPTA-S1.

\begin{theorem} \label{th5}
Let $\{\mathbf{x}^{k}\}$ be the sequence generated by CIFPTA-S1. Then\\
(1) The sequence $\{\mathcal{C}_{\lambda}(\mathbf{x}^{k})\}$ is decreasing.\\
(2) $\{\mathbf{x}^{k}\}$ is asymptotically regular, i.e., $\lim_{k\rightarrow\infty}\|\mathbf{x}^{k+1}-\mathbf{x}^{k}\|_{2}^{2}=0$.\\
(3) $\{\mathbf{x}^{k}\}$ converges to the unique global stationary point of iteration (\ref{equ32}).
\end{theorem}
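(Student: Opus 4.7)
The proof plan has three parts matching the three claims, and the first two follow the standard surrogate-function machinery already developed in Section 2, now specialized to the convex regime $0 < a \leq 1/\sqrt{\lambda\mu}$.

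For part (1), I would exploit the two structural facts recalled in Section 2: the majorization $\mathcal{C}_{\lambda,\mu}(\mathbf{x},\mathbf{z}) \geq \mu\,\mathcal{C}_\lambda(\mathbf{x})$ valid whenever $0 < \mu \leq \|\mathbf{A}\|_2^{-2}$, together with the equality $\mathcal{C}_{\lambda,\mu}(\mathbf{z},\mathbf{z}) = \mu\,\mathcal{C}_\lambda(\mathbf{z})$ at the base point. By Theorem \ref{the4} applied coordinatewise, under the CIFPTA-S1 parameter condition the iterate $\mathbf{x}^{k+1} = H_{\lambda\mu}(B_\mu(\mathbf{x}^k))$ is the \emph{unique} global minimizer of the strictly convex surrogate $\mathbf{x}\mapsto \mathcal{C}_{\lambda,\mu}(\mathbf{x},\mathbf{x}^k)$. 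Chaining these ingredients yields
\[
\mu\,\mathcal{C}_\lambda(\mathbf{x}^{k+1}) \leq \mathcal{C}_{\lambda,\mu}(\mathbf{x}^{k+1},\mathbf{x}^k) \leq \mathcal{C}_{\lambda,\mu}(\mathbf{x}^k,\mathbf{x}^k) = \mu\,\mathcal{C}_\lambda(\mathbf{x}^k),
\]
which proves (1). Refining the middle step by rewriting $\mathcal{C}_{\lambda,\mu}(\mathbf{x}^{k+1},\mathbf{x}^k) - \mu\,\mathcal{C}_\lambda(\mathbf{x}^{k+1}) = \|\mathbf{x}^{k+1}-\mathbf{x}^k\|_2^2 - \mu\|\mathbf{A}(\mathbf{x}^{k+1}-\mathbf{x}^k)\|_2^2 \geq (1-\mu\|\mathbf{A}\|_2^2)\|\mathbf{x}^{k+1}-\mathbf{x}^k\|_2^2$, and crucially using that Scheme 1 requires $\mu \in (0,\|\mathbf{A}\|_2^{-2})$ as a \emph{strict} inclusion, I obtain
\[
(1-\mu\|\mathbf{A}\|_2^2)\,\|\mathbf{x}^{k+1}-\mathbf{x}^k\|_2^2 \leq \mu\bigl[\mathcal{C}_\lambda(\mathbf{x}^k) - \mathcal{C}_\lambda(\mathbf{x}^{k+1})\bigr]
\]
with a strictly positive prefactor. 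Summing over $k$ telescopes the right-hand side and is bounded by $\mu\,\mathcal{C}_\lambda(\mathbf{x}^0)$ using $\mathcal{C}_\lambda \geq 0$, which forces $\sum_k\|\mathbf{x}^{k+1}-\mathbf{x}^k\|_2^2 < \infty$ and hence $\|\mathbf{x}^{k+1}-\mathbf{x}^k\|_2 \to 0$, establishing (2).

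For part (3), the plan is to first argue that $\{\mathbf{x}^k\}$ is bounded (using the nonexpansiveness $|h_{\lambda\mu}(\gamma)| \leq |\gamma|$ built into the thresholding operator together with the update structure $B_\mu(\mathbf{x}) = (I-\mu\mathbf{A}^\top\mathbf{A})\mathbf{x} + \mu\mathbf{A}^\top\mathbf{b}$), then extract a convergent subsequence $\mathbf{x}^{k_j}\to\mathbf{x}^\infty$ by Bolzano--Weierstrass. Under the convexity condition $0<a\leq 1/\sqrt{\lambda\mu}$, Theorem \ref{the4} makes $H_{\lambda\mu}\circ B_\mu$ a continuous single-valued map (the ambiguity at the threshold which arises in the non-convex regime disappears), so combining continuity with the asymptotic regularity from (2) identifies $\mathbf{x}^\infty$ as a fixed point of iteration (\ref{equ32}). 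To upgrade to convergence of the whole sequence and to identify the limit as the unique global fixed point, I would invoke the strict convexity of each coordinatewise surrogate (Theorem \ref{the3}): the fixed point equation $\mathbf{x}^\ast_i = h_{\lambda\mu}((B_\mu(\mathbf{x}^\ast))_i)$ inherits uniqueness from the strict convexity, and since $\{\mathcal{C}_\lambda(\mathbf{x}^k)\}$ is monotone with a limit, every cluster point must share the same objective value, pinning down the whole sequence.

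The main obstacle I anticipate lies entirely in part (3). Boundedness is delicate because $\mathbf{A}$ is fat ($m \ll n$) so the data-fidelity term is not coercive, and the penalty $P_a$ is uniformly bounded by $n$, so $\mathcal{C}_\lambda$ fails to be coercive on the null space of $\mathbf{A}$; the boundedness argument therefore cannot be extracted from the decrease property alone and must exploit the contractive geometry of $B_\mu$ on the row space together with the nonexpansiveness of the thresholding on the null-space component. Equally subtle is the claim of a \emph{unique global} fixed point: $\mathcal{C}_\lambda$ itself is not globally strictly convex in $\mathbf{x}$ even under the parameter restriction (the concavity of $\rho_a$ away from the origin is only tamed by the quadratic surrogate, not by $\|\mathbf{A}\mathbf{x}-\mathbf{b}\|_2^2$ alone), so the uniqueness must be argued through the surrogate/fixed-point characterization rather than through convex analysis of $\mathcal{C}_\lambda$ directly.
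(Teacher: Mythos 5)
Your parts (1) and (2) are correct and are essentially the argument the paper relies on: the published proof is a single sentence deferring to Theorem 4.1 of \cite{Peng2017}, and that argument runs precisely your chain $\mu\,\mathcal{C}_{\lambda}(\mathbf{x}^{k+1})\leq\mathcal{C}_{\lambda,\mu}(\mathbf{x}^{k+1},\mathbf{x}^{k})\leq\mathcal{C}_{\lambda,\mu}(\mathbf{x}^{k},\mathbf{x}^{k})=\mu\,\mathcal{C}_{\lambda}(\mathbf{x}^{k})$, followed by the telescoping bound with the strictly positive prefactor $1-\mu\|\mathbf{A}\|_{2}^{2}$. Up to that point you have reproduced, in more detail, what the paper asserts.

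The obstacles you flag in part (3) are genuine, and neither your sketch nor the paper closes them. First, boundedness: your proposed route via the ``contractive geometry of $B_{\mu}$ on the row space together with the nonexpansiveness of the thresholding on the null-space component'' does not go through, because $I-\mu\mathbf{A}^{\top}\mathbf{A}$ acts as the identity on $\ker\mathbf{A}$ (so $\|I-\mu\mathbf{A}^{\top}\mathbf{A}\|_{2}=1$ for a fat $\mathbf{A}$), and coordinatewise nonexpansiveness of $h_{\lambda\mu}$ then only yields $\|\mathbf{x}^{k+1}\|_{2}\leq\|\mathbf{x}^{k}\|_{2}+\mu\|\mathbf{A}^{\top}\mathbf{b}\|_{2}$, which permits unbounded drift; the level-set argument available in the $S_{1/2}$ setting of \cite{Peng2017} also fails here because $P_{a}(\mathbf{x})\leq n$ is bounded, so $\mathcal{C}_{\lambda}$ is not coercive and its sublevel sets are unbounded along $\ker\mathbf{A}$. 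Second, uniqueness: strict convexity of the surrogate in (\ref{equ14}) for fixed $\mathbf{z}$ guarantees only that each iteration is single-valued; it does not make the fixed-point set of $H_{\lambda\mu}\circ B_{\mu}$ a singleton, since $\mathcal{C}_{\lambda}$ itself remains nonconvex ($\|\mathbf{A}\mathbf{x}-\mathbf{b}\|_{2}^{2}$ is degenerate on $\ker\mathbf{A}$ and $\rho_{a}$ is concave on $[0,\infty)$), so distinct stationary points of iteration (\ref{equ32}) are not excluded. The paper's own justification --- ``due to the convexity of the objective function in (\ref{equ14})'' --- commits exactly the conflation you identify, so your proposal is at least as rigorous as the published argument; but as written, claim (3) is established by neither.
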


\begin{proof}
The proof of Theorem \ref{th5} is similar to the proof of Theorem 4.1 in \cite{Peng2017}. Due to the convexity of the objective function in (\ref{equ14}), the sequence $\{\mathbf{x}^{k}\}$ converges
to the unique global stationary point of the iteration (\ref{equ32}).
\end{proof}

\section{Numerical Simulations}\label{section4}

In this section, we present a series of numerical simulations on some sparse signal recovery problems to demonstrate the performances of our convex iterative FP thresholding algorithm.
All the numerical simulations here are conducted by applying the CIFPTA-S2 (convex iterative FP thresholding algorithm-Scheme 2). In these numerical simulations, we compare CIFPTA-S2
with our previous proposed IFPTA-S2 (iterative FP thresholding algorithm-Scheme 2 \cite{Li2019}) and a state-of-art method (Half algorithm \cite{Xu2012}). These numerical simulations 
are all conducted on a personal computer (3.40GHz, 16.0GB RAM) with MATLAB R2015b.

We generate a Gaussian random matrix of size $128\times 512$ with entries i.i.d. to Gaussian distribution, $\mathcal{N}(0,1)$, as the measurement matrix $\mathbf{A}$. The original $r$-sparse
signal $\mathbf{\bar{x}}$ with dimension $512$ is generated by choosing the non-zero locations over the support in random, and each nonzero entry is generated as follows\cite{Becker2011}:
\begin{equation}\label{equ46}
\mathbf{\bar{x}}_{i}=\eta_{1}[i]10^{\alpha\eta_{2}[i]},
\end{equation}
where $\eta_{1}[i]=\pm1$ with probability $1/2$ (a random sign), $\eta_{2}[i]$ is uniformly distributed in $[0,1]$ and the parameter $\alpha$ quantifies the dynamic range. We
generate the measurement vector $\mathbf{b}$ with dimension $128$ by $\mathbf{b}=\mathbf{A}\mathbf{\bar{x}}$, and therefore we know the sparsest solution to $\mathbf{A}\mathbf{\bar{x}}=\mathbf{b}$.
The stopping criterion is defined as
$$\frac{\|\mathbf{x}^{k+1}-\mathbf{x}^{k}\|_{2}}{\max\{\|\mathbf{x}^{k}\|_{2},1\}}\leq 10^{-15}$$
or maximum iteration step equal to 3000. The success recovery of the original sparse vector $\mathbf{\bar{x}}$ is measured by computing
$$\mathrm{RE}=\|\mathbf{x}^{\ast}-\mathbf{\bar{x}}\|_{2}.$$
In our numerical simulations, if $\mathrm{RE}\leq10^{-4}$, we say that the algorithm can exact recovery the original $r$-sparse signal $\mathbf{\bar{x}}$. For each simulation, we repeatedly perform
30 tests and present average results in this paper. In these numerical experiments, we set $\mu=0.99\|\mathbf{A}\|_{2}^{-2}$, $\zeta=10^{-4}$, $\tau=0.5$ and $c=0.5$ in our CIFPTA-S2. Moreover, 
we choose the parameter $\hat{a}$ as a random integer between 1 and 100 in CIFPTA-S2, and generated by the Matlab code: \texttt{randint(1,1,[1 100])}. In theses numerical simulations,
we set $a=2$, $\epsilon=0.01$ in our previous proposed IFPTA-S2. 

\begin{figure}[h!]
 \centering
 \includegraphics[width=0.45\textwidth]{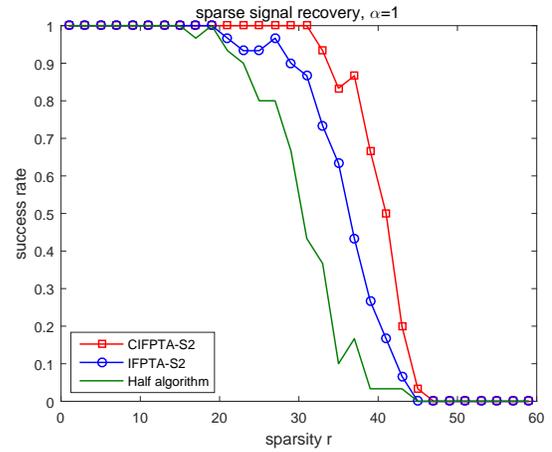}
\caption{The comparison of CIFPTA-S2, IFPTA-S2 and Half algorithm in the recovery of a sparse signal $\mathbf{\bar{x}}$ with different sparsity $r$, $\alpha=1$.}
\label{fig:3}
\end{figure}

The graphs demonstrated in Figs.\ref{fig:3}, \ref{fig:4} and \ref{fig:5} show the comparisons of CIFPTA-S2, IFPTA-S2 and Half algorithm in  recovering a sparse signal $\mathbf{\bar{x}}$ 
with different sparsity $r$. In these three numerical simulations, we set $\alpha=1$, $1.5$ and $2$ respectively. As we can see, the CIFPTA-S2 has the best performance compared with 
other two algorithms.

\begin{figure}[h!]
 \centering
 \includegraphics[width=0.45\textwidth]{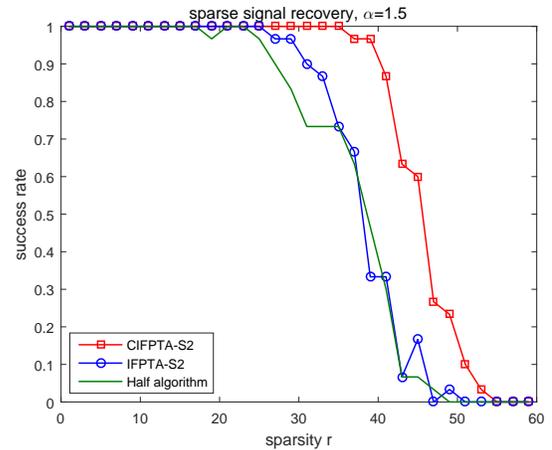}
\caption{The comparison of CIFPTA-S2, IFPTA-S2 and Half algorithm in the recovery of a sparse signal $\mathbf{\bar{x}}$ with different sparsity $r$, $\alpha=1.5$.}
\label{fig:4}
\end{figure}

\begin{figure}[h!]
 \centering
 \includegraphics[width=0.45\textwidth]{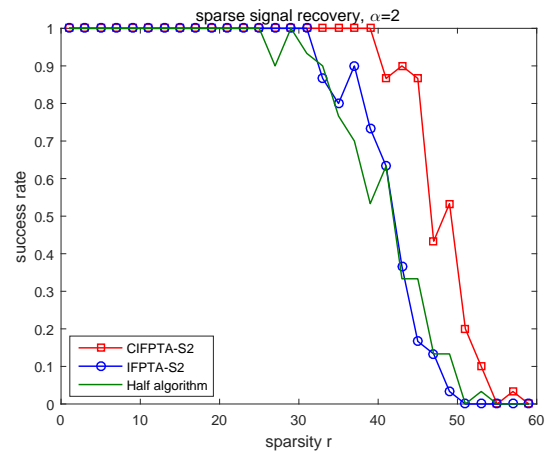}
\caption{The comparison of CIFPTA-S2, IFPTA-S2 and Half algorithm in the recovery of a sparse signal $\mathbf{\bar{x}}$ with different sparsity $r$, $\alpha=2$.}
\label{fig:5}
\end{figure}

\section{Conclusion}\label{section5}

The problem of recovering a sparse signal from the linear constraints, known as the $\ell_{0}$-norm minimization problem, has been attracting extensive attention in recent years. Unfortunately,
the $\ell_{0}$-norm minimization problem is a NP-hard problem. In this paper, we first review some known results from our latest work for our previous proposed iterative FP thresholding algorithm
to solve the problem $(FP^{\lambda}_{a})$, and then generate a convex iterative FP thresholding algorithm to solve the problem $(FP^{\lambda}_{a})$. Two parameter-setting strategies are given to
the choice of regularized parameter $\lambda$ and parameter $a$. Corresponding, two schemes of convex iterative FP thresholding algorithm are proposed to solve the problem $(FP^{\lambda}_{a})$.
One is convex iterative FP thresholding algorithm-Scheme 1 and the other is convex iterative FP thresholding algorithm-Scheme 2. A global convergence theorem is proved for the convex iterative FP
thresholding algorithm-Scheme 1. Under an adaptive rule for the choice of the regularized parameter $\lambda$ and parameter $a$, the convex iterative FP thresholding algorithm-Scheme 2 is adaptive
both for the choice of the regularized parameter $\lambda$ and parameter $a$. These are the advantage for our convex iterative FP thresholding algorithm-Scheme 2 compared with our previous proposed
two schemes of iterative FP thresholding algorithm. Numerical experiments on some sparse signal recovery problems have shown that our our convex iterative FP thresholding algorithm-Scheme 2 performs
the best in recovering a sparse signal compared with the iterative FP thresholding algorithm-Scheme 2 and Half algorithm.

\ifCLASSOPTIONcaptionsoff
  \newpage
\fi

\end{document}